 \newtheorem{thm}{Theorem}[section]
 \newtheorem{cor}[thm]{Corollary}
 \newtheorem{prop}[thm]{Proposition}
 \theoremstyle{definition}
 \newtheorem{defn}[thm]{Definition}
 \theoremstyle{remark}
 \newtheorem{rem}[thm]{Remark}
 \newtheorem{eg}{Example}
 \numberwithin{equation}{section}
\begin{document}

\title{Evolutes of plane curves and null curves in Minkowski $3$-space}


\author{Boaventura Nolasco}

\address{Instituto Superior de Ci\^{e}ncias da Educa\c{c}\~{a}o\\
Lubango, Angola.}

\email{beleza2011@live.com.pt}

\author{ Rui Pacheco}
\address{Departamento de Matem\'{a}tica, Universidade da Beira Interior\\
Covilh\~{a}, Portugal.}
\email{rpacheco@ubi.pt}

\begin{abstract}
We use the isotropic projection of Laguerre geometry in order to establish  a correspondence between plane curves and null curves in the Minkowski $3$-space.  We describe the geometry of null curves (Cartan frame, pseudo-arc parameter, pseudo-torsion, pairs of associated curves) in terms of the curvature of the corresponding plane curves. This leads to an alternative description of all plane curves which are Laguerre congruent to a given one.

\end{abstract}

\maketitle

\section{Introduction}
Two dimensional Laguerre geometry is the geometry of oriented contact between circles in the Euclidean plane $\mathbf{E}^2$ (points of $\mathbf{E}^2$ are viewed as circles with radius zero).   An efficient and intuitive model of Laguerre geometry is the so called Minkowski model \cite{cecil,FS}. This model establishes a $1$-$1$ correspondence, called the isotropic projection \cite{cecil,FS},  between points in the Minkowski $3$-space $\mathbf{E}_1^3$ and oriented circles in the Euclidean plane. The group of Laguerre transformations is identified with the subgroup of the group of affine transformations of $\mathbf{E}_1^3$ which is generated by linear (Lorentzian) isometries, translations and dilations of $\mathbf{E}_1^3$.

A differentiable one-parameter family of circles in the Euclidean plane corresponds, via  isotropic projection, to a differentiable curve in the Minkowski $3$-space. In particular, the family of osculating circles to a given plane curve $\gamma$ corresponds to a certain  curve $\varepsilon$ in $\mathbf{E}_1^3$, which we will call the \emph{$L$-evolute} of $\gamma$. The $L$-evolute of a plane curve is a null curve and, conversely, any null curve is the $L$-evolute of some plane curve (see Section \ref{levolutesection}).

The importance of light like (degenerate) submanifolds in relativity theory has been emphasized by many mathematical and physical researchers (see the monographs \cite{Dugal1,Dugal2}). Several authors have also investigated  the particular case of null curves in the Minkowski $3$-space  (see the surveys \cite{ino-lee,rafael}). In the present paper we describe the geometry of null curves (Cartan frame, pseudo-arc parameter, pseudo-torsion) in terms of the curvature of the corresponding plane curves. This leads us to the notion of \emph{potential function} (see Definition \ref{defpotential}). A potential function together with an initial condition completely determines the null curve and the underlying plane curve through the formulae of Theorem \ref{fund}. As a consequence, we obtain an alternative description of all plane curves which are \emph{Laguerre congruent}  to a given curve $\gamma$ as follows (see Remark \ref{lagcongruent} for details): starting with the plane curve $\gamma$, compute the pseudo-torsion $\tau$ of its $L$-evolute; up to scale, the pseudo-torsion is invariant under Laguerre transformations of $\gamma$ and the potential functions associated to null curves with pseudo-torsion $\tau$ are precisely the solutions of a certain second order differential equation; use formulae of Theorem \ref{fund} to recover from these solutions the corresponding plane curves; in this way, we obtain all the plane curves which are Laguerre congruent with $\gamma$.

 In Section \ref{assoc}, we describe, in terms of their potential functions, some classes of associated null curves: Bertrand pairs  \cite{BBI,ino-lee}, null curves with common binormal lines \cite{H-I}, and binormal-directional curves \cite{choikim}. We shall also prove the following two interesting results on null curves:
1) a  null helix parameterized by pseudo-arc admits a  null curve parameterized by pseudo-arc with common binormal lines at corresponding points if, and only if, it has pseudo-torsion $\tau=0$ (see Corollary \ref{corolint}); 2) given a null curve $\varepsilon$  parameterized by pseudo-arc, there exists a null helix $\bar\varepsilon$  parameterized by pseudo-arc with constant pseudo-torsion $\bar\tau=0$  and a $1$-$1$ correspondence between points of the two curves $\varepsilon$ and $\bar\varepsilon$ such that, at corresponding points, the  tangent lines are parallel (see Theorem \ref{finish}).
\vspace{.10in}

\section{Curves in the Euclidean plane}
We start by fixing some notation and by recalling  standard facts concerning curves in the Euclidean plane.

\vspace{.10in}

Let $I$ be an open interval of $\mathbb{R}$ and $\gamma:I\to \mathbf{E}^2$ be a regular plane curve, that is, $\gamma$ is differentiable sufficiently many times and $\dot{\gamma}(t)\neq 0$ for all $t\in I$, where $\mathbf{E}^2=(\mathbb{R}^2,\cdot)$ and $\cdot$ stands for  the standard Euclidean inner product on $\mathbb{R}^2$.
Consider the unit tangent vector $\mathbf{{t}}=\dot{\gamma}/|\dot{\gamma}(t)|$  and the unit normal vector $\mathbf{n}=J(\mathbf{{t}})$ of $\gamma$, where $J$ is the anti-clockwise rotation by $\pi/2$. The pair $(\mathbf{{t}},\mathbf{{n}})$ satisfies the Frenet equations
$\dot{\mathbf{t}}=|\dot{\gamma}|k\mathbf{n}$ and $\dot{\mathbf{n}}=-|\dot{\gamma}|k\mathbf{t}$,
where $k=\det(\dot{\gamma},\ddot{\gamma})/|\dot{\gamma}|^3$ is the curvature of $\gamma$.
We denote by $l_{\gamma,t_0}(t)$ the arc length at $t$ of   $\gamma$ with respect to the starting point $t_0\in I$.

Assume that the curvature $k$ is a nonvanishing function on $I$. Let  $u(t)=1/k(t)$ be the (signed) radius of curvature at $t$. The \emph{evolute} of $\gamma$ is the curve $\gamma_\varepsilon:I\to \mathbb{R}^2$ defined by
$\gamma_\varepsilon(t)=\gamma(t)+ u(t){\mathbf{n}(t)},$  which is a regular curve if, additionally, $\dot{u}$ is a nonvanishing function on $I$. In this case, the curvature
$k_\varepsilon$ of $\gamma_\varepsilon$ is given by
\begin{equation}\label{curvatureevolute} k_\varepsilon=\frac{\det(\dot{\gamma}_\varepsilon,\ddot{\gamma}_\varepsilon)}{|\dot{\gamma}_\varepsilon|^3}=\frac{|\dot{\gamma}|}{u\dot{u}}.
\end{equation}
We also have
\begin{equation*}\label{arclengthevolute}
 l_{\gamma_\varepsilon,t_0}(t)=\int_{t_0}^t|\dot{\gamma_\varepsilon}(s)|ds=\pm\int_{t_0}^t\dot{u}(s)ds=\pm\big(u(t)-u(t_0)\big),
 \end{equation*}
which means that $u$ is an arc length parameter of the evolute.

If $t$ is an arc length parameter of $\gamma$, the curve $\gamma_\iota(t)=\gamma(t)-t\mathbf{t}(t)$ is an \emph{involute} of $\gamma$.  It is well known that the evolute of the involute $\gamma_\iota$ is precisely $\gamma$ and that, for any other choice of arc length parameter, the corresponding involute of $\gamma$ is parallel to $\gamma_\iota$.

Finally, recall that,  given a smooth function $k:I\to \mathbb{R}$, there exists a plane curve $\gamma:I\to \mathbb{R}^2$ parameterized by  arc length whose curvature is $k$. Moreover, $\gamma$ is unique up to rigid motion and is given by
$\gamma(t)=(\int\cos\theta(t)dt,\int\sin\theta(t)dt)$
where the \emph{turning angle} $\theta(t)$ is given by
$\theta(t)=\int k(t)dt.$

\section{Laguerre geometry and the Minkowski $3$-space}

Consider on $\mathbb{R}^3$ the Lorentzian inner product defined by
$\vec{u}\cdot\vec{v}=u_1v_1+u_2v_2-u_3v_3,$
for $\vec{u}=(u_1,u_2,u_3)$ and $\vec{v}=(v_1,v_2,v_3)$.
The Minkowski $3$-space is the metric space $\mathbf{E}_1^3=(\mathbb{R}^3,\cdot)$. If $\vec{u}\in \mathbf{E}_1^3$ is a spacelike vector, which means that  $\vec{u}\cdot\vec{u}>0$, we denote
 $|\vec{u}|=\sqrt{\vec{u}\cdot\vec{u}}$.
 The light cone $C_P$ with vertex at $P\in\mathbf{E}_1^3$ is the quadric
$$C_P=\{X\in \mathbf{E}_1^3|\,\, (X-P)\cdot (X-P)=0\}.$$ We have a $1$-$1$ correspondence, called the \emph{isotropic projection}, between points in  $\mathbf{E}_1^3$ and oriented circles in the Euclidean plane, which is defined as follows. Given $P=(p_1,p_2,p_3)$ in $\mathbf{E}_1^3$, consider the light cone $C_P$. The intersection of $C_p$ with the Euclidean plane $\mathbf{E}^2\cong\{(u_1,u_2,u_3)\in \mathbf{E}_1^3|\,u_3=0\}$ is a circle centered at $(p_1,p_2)$ with radius $|p_3|$. The orientation of this circle is anti-clockwise if $p_3>0$ and clockwise if $p_3<0$. Points in $\mathbf{E}^2$ are regarded as circles of zero radius and correspond to points $P=(p_1,p_2,p_3)$ in $\mathbf{E}_1^3$ with $p_3=0$. Making use of this correspondence, \emph{Laguerre transformations} are precisely those affine transformations $L:\mathbf{E}_1^3\to\mathbf{E}_1^3$ of the form $L(\vec{u})=\lambda A\vec{u}+\vec{t}$, where $\lambda\in \mathbb{R}\setminus\{0\}$, $\vec t\in\mathbf{E}_1^3$ and $A\in O_1(3)$ is an orthogonal transformation of $\mathbf{E}_1^3$. For details see \cite{cecil}.

\section{Null curves in Minkowski $3$-space}\label{secnul}
 Next we recall some standard facts concerning null curves in the Minkowski $3$-space. For details, we refer the reader to \cite{ino-lee,rafael}.

\vspace{.10in}

A regular curve $\varepsilon:I\to \mathbf{E}_1^3$ with parameter $t$ is called a \emph{null curve} if $\dot{\varepsilon}$ is lightlike, that is $\dot{\varepsilon} \cdot \dot{\varepsilon}= 0$.
Differentiating this, we obtain $\dot\varepsilon\cdot \ddot \varepsilon= 0$, which means that $\ddot \varepsilon$ lies in  $\mathrm{Span}\{\dot{\varepsilon}\}^\perp$.
We will assume throughout this paper that
$\dot\varepsilon(t)$ and $\ddot{\varepsilon}(t)$ are linearly independent for all $t\in I$ (in particular, $\varepsilon$ can not be a straight line). Then we have $\mathrm{Span}\{\dot{\varepsilon}\}^\perp=\mathrm{Span}\{\dot{\varepsilon}, \ddot{\varepsilon}\}$ and $\ddot{\varepsilon}$ is spacelike.
If  $t=\phi(s)$ is a solution of the differential equation
\begin{equation}\label{genpseudoarc}
\frac{d\phi}{ds}=\pm\frac{1}{\sqrt {|\ddot{\varepsilon}\circ \phi|}},
\end{equation}
then $\upsilon=\varepsilon\circ \phi$  satisfies
$\big|\ddot{\upsilon}\big|= 1.$
In this case, we say that
 $s$ is a \emph{pseudo-arc parameter} \cite{F-G-L,ino-lee,rafael} of $\varepsilon$.

Suppose now that the null curve $\varepsilon:I\to \mathbf{E}_1^3$ is parameterized by pseudo-arc parameter $s$, that is $|\ddot\varepsilon|=1$.  The \emph{tangent vector} is $\mathbf{T}=\dot\varepsilon$ and the (unit)  \emph{normal vector} is
$\mathbf{N}=\ddot\varepsilon$. Define also the \emph{binormal vector} as the unique lightlike vector $\mathbf{B}$ orthogonal to $\mathbf{N}$ satisfying $\mathbf{T}\cdot \mathbf{B}\equiv-1$.
The \emph{Cartan frame} $\{\mathbf{T},\mathbf{B},\mathbf{N}\}$ of $\varepsilon$ satisfies the following Frenet equations:
\begin{equation}\label{freneteqspace}
\left(
    \begin{array}{c}
      \dot{\mathbf{T}} \\
      \dot{\mathbf{B}} \\
      \dot{\mathbf{N}} \\
    \end{array}
  \right)=\underbrace{\left(
            \begin{array}{ccc}
              0 & 0 & 1 \\
              0 & 0 & \tau \\
              \tau & 1 & 0 \\
            \end{array}
          \right)}_{A_\tau}\left(
                   \begin{array}{c}
      {\mathbf{T}} \\
    {\mathbf{B}} \\
      {\mathbf{N}} \\
    \end{array}
  \right),
 \end{equation}
where $\tau:I\to\mathbb{R}$ is called the \emph{pseudo-torsion} of $\varepsilon$. It follows  from \eqref{freneteqspace} that \begin{equation}\label{Tc}\dddot{\mathbf{T}}-2\tau \dot{\mathbf{T}}-\dot{\tau}\mathbf{T}=0.\end{equation} On the other hand, since $\varepsilon$ is a null curve parameterized by pseudo-arc length,  we  have $\mathbf{T}\cdot \mathbf{T}=0 $ and  $\dot{\mathbf{T}}\cdot \dot{\mathbf{T}}=1$. Hence the components $f_i$ of $\mathbf{T}=(f_1,f_2,f_3)$ satisfy
\begin{equation}\label{taudet}
 \dddot{f}_i-2\tau \dot{f}_i-\dot{\tau}f_i=0,\qquad f_1^2+f_2^2-f_3^2=0,\qquad \dot f_1^2+\dot f_2^2-\dot f_3^2=1.
\end{equation}

 Conversely, given a function $\tau:I\to \mathbb{R}$, $s_0\in I$  and a fixed basis $\{\mathbf{T}_0,\mathbf{B}_0,\mathbf{N}_0\}$ of
 $\mathbf{E}_1^3$ satisfying
 $$\mathbf{T}_0\cdot \mathbf{T}_0=\mathbf{B}_0\cdot\mathbf{B}_0=0,\quad \mathbf{N}_0\cdot\mathbf{N}_0=1,\quad \mathbf{T}_0\cdot \mathbf{N}_0=\mathbf{B}_0\cdot \mathbf{N}_0=0,\quad \mathbf{T}_0\cdot \mathbf{B}_0=-1,$$
 consider the linear map which is represented,  with respect to this basis, by the matrix $A_\tau$.
This linear map lies in the Lie algebra $\mathfrak{o}_1(3)$, which means that we can integrate in order to get a map $F:I\to O_1(3),$  with $F(s_0)=Id$, such that
the frame $\{\mathbf{T}, \mathbf{B}, \mathbf{N}\}=\{F\mathbf{T}_0, F\mathbf{B}_0, F\mathbf{N}_0\}$
satisfies \eqref{freneteqspace}. Hence $\varepsilon(s)=\int \mathbf{T}(s)ds$
defines a regular null curve parameterized by pseudo-arc length with pseudo-torsion $\tau$, and $\varepsilon$ is unique up to Lorentz isometry.

The pseudo-arc parameter  and pseudo-torsion are preserved under Lorentz isometries. Regarding dilations, we have the following.
\begin{prop}\label{homoex}
 If $\varepsilon$ is a null curve with pseudo-arc parameter $s$ and pseudo-torsion $\tau$, and $\lambda\neq 0$ is a real number, then  $\bar s=\sqrt{|\lambda|}\, s$  is a pseudo-arc parameter of the null curve $\bar \varepsilon= \lambda\,\varepsilon$, which has pseudo-torsion
  \begin{equation}\label{pseudodilation}
  \bar\tau(\bar s)=\frac{1}{|\lambda|}\tau\big({\bar s}/{\sqrt{|\lambda|}}\big).
  \end{equation}
\end{prop}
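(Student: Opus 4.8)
The plan is to work entirely through the chain rule, exploiting the fact that $\bar\varepsilon=\lambda\varepsilon$ differs from $\varepsilon$ by a pure dilation and that the two parameters are related by the linear reparametrization $s=\bar s/\sqrt{|\lambda|}$, so that $ds/d\bar s=1/\sqrt{|\lambda|}$. Throughout I write $\dot{\phantom{x}}$ for $d/ds$ and $'$ for $d/d\bar s$.

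First I would verify that $\bar s$ is a pseudo-arc parameter of $\bar\varepsilon$. Differentiating $\bar\varepsilon(\bar s)=\lambda\,\varepsilon\big(\bar s/\sqrt{|\lambda|}\big)$ twice gives $\bar\varepsilon'=\frac{\lambda}{\sqrt{|\lambda|}}\dot\varepsilon$ and $\bar\varepsilon''=\frac{\lambda}{|\lambda|}\ddot\varepsilon$. Since $\dot\varepsilon$ is lightlike, so is $\bar\varepsilon'$; and since $\ddot\varepsilon$ is spacelike with $|\ddot\varepsilon|=1$, the vector $\bar\varepsilon''=\pm\ddot\varepsilon$ is spacelike with $|\bar\varepsilon''|=1$. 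Because $\bar\varepsilon',\bar\varepsilon''$ are nonzero scalar multiples of $\dot\varepsilon,\ddot\varepsilon$, they remain linearly independent, so $\bar s$ is indeed a pseudo-arc parameter.

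Next I would express the Cartan frame of $\bar\varepsilon$ in terms of that of $\varepsilon$. The computations above give at once $\bar{\mathbf T}=\bar\varepsilon'=\frac{\lambda}{\sqrt{|\lambda|}}\mathbf T$ and $\bar{\mathbf N}=\bar\varepsilon''=\frac{\lambda}{|\lambda|}\mathbf N$. For the binormal I would posit $\bar{\mathbf B}=c\,\mathbf B$: such a vector is automatically lightlike and orthogonal to $\bar{\mathbf N}$, so only the normalization $\bar{\mathbf T}\cdot\bar{\mathbf B}=-1$ has to be imposed, and using $\mathbf T\cdot\mathbf B=-1$ this forces $c=\sqrt{|\lambda|}/\lambda$.

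Finally I would read off the pseudo-torsion from the Frenet equation $\bar{\mathbf B}'=\bar\tau\,\bar{\mathbf N}$. Differentiating $\bar{\mathbf B}=\frac{\sqrt{|\lambda|}}{\lambda}\mathbf B$ and substituting $\dot{\mathbf B}=\tau\mathbf N$ yields $\bar{\mathbf B}'=\frac{1}{\lambda}\tau\,\mathbf N$; comparing with $\bar\tau\,\bar{\mathbf N}=\bar\tau\frac{\lambda}{|\lambda|}\mathbf N$ gives $\bar\tau=\frac{1}{|\lambda|}\tau$, understood at $s=\bar s/\sqrt{|\lambda|}$, which is the asserted formula. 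The argument is essentially bookkeeping; the only delicate point is tracking the factors $\lambda/|\lambda|$ and $\sqrt{|\lambda|}$ produced by the absolute value in the reparametrization, and checking in particular that the sign of $\lambda$ cancels so that $\bar\tau$ depends on $\lambda$ only through $|\lambda|$.
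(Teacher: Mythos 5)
Your proposal is correct and follows essentially the same route as the paper: apply the chain rule to the dilation and the linear reparametrization $s=\bar s/\sqrt{|\lambda|}$ to get $\bar{\mathbf T}=\frac{\lambda}{\sqrt{|\lambda|}}\mathbf T$ and $\bar{\mathbf N}=\mathrm{sign}(\lambda)\mathbf N$, which already shows $\bar s$ is a pseudo-arc parameter. The only (immaterial) difference is that you extract $\bar\tau$ by normalizing the binormal and using $\bar{\mathbf B}'=\bar\tau\bar{\mathbf N}$, whereas the paper reads it off as the component of $\dot{\bar{\mathbf N}}$ along $\bar{\mathbf T}$; both are routine once the frame transformation is in hand, and your sign bookkeeping is right.
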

\begin{proof}
By applying twice the chain rule, we see that the tangent vector $\bar{\mathbf{T}}=\frac{d\bar \varepsilon}{d\bar s}$ and the normal vector  $\bar{\mathbf{N}}=\frac{d^2\bar \varepsilon}{d\bar s^2}$ of $\bar \varepsilon$ are related with the tangent vector $\mathbf{T}$ and the normal vector  $\mathbf{N}$ of $\varepsilon$ by
$$\bar{\mathbf{T}}(\bar s)=\frac{\lambda}{{\sqrt{|\lambda|}}} \mathbf{T}\big({\bar s}/{\sqrt{|\lambda|}}\big),\qquad {\bar{\mathbf{N}}}(\bar s)=\mathrm{sign}(\lambda)\mathbf{N}({\bar s}/{\sqrt{|\lambda|}}). $$
This shows that $\bar s$ is a pseudo-arc parameter. On the other hand, since the pseudo-torsion $\bar \tau$ is the component of  $\dot{\bar{\mathbf{N}}}$ along $\bar{\mathbf{T}}$, formula \eqref{pseudodilation} can now be easily verified.
\end{proof}
%

\section{The $L$-evolute of a plane curve}\label{levolutesection}
A differentiable one-parameter family of circles in the Euclidean plane corresponds, via  isotropic projection, to a differentiable curve in the Minkowski $3$-space $\mathbf{E}_1^3$. In particular, the family of osculating circles to a given plane curve $\gamma$ corresponds to a certain  curve in $\mathbf{E}_1^3$, which we will call the \emph{$L$-evolute} of $\gamma$. In the present section, we show that,  the $L$-evolute of a plane curve is a null curve and that, conversely, any null curve  is the $L$-evolute of some plane curve if it has nonvanishing third coordinate. We  also describe the geometry of null curves in terms of the curvature of the corresponding plane curves.

\vspace{.10in}

\begin{defn}\label{levol} Consider a regular curve $\gamma:I\to \mathbf{E}^2$,  with curvature $k$ and parameter $t$. Assume that $k$ and its derivative $\dot k$ are nonvanishing functions on $I$. Let $\gamma_\varepsilon$ be its evolute and $u=1/k$ its (signed) radius of curvature. The \emph{$L$-evolute} of $\gamma$ is the curve $\varepsilon$ in the Minkowski $3$-space $\mathbf{E}_1^3$ defined by $\varepsilon=(\gamma_\varepsilon,u)$; that is, for each $t$, $\varepsilon(t)\in\mathbf{E}_1^3$ corresponds to the osculating circle of $\gamma$ at $t$ under the isotropic projection.
 \end{defn}

 Observe that whereas the evolute of a curve is independent of the parameterization, the $L$-evolute depends on the orientation: as a matter of fact, if $\gamma_R$ is a orientation reversing reparameterization of $\gamma$, then the trace of the $L$-evolute of $\gamma_R$ is that of $(\gamma_\varepsilon,-u)$.
\begin{prop}\label{null-evolute}
Given a  regular plane curve  $\gamma:I\to \mathbf{E}^2$ in the conditions of Definition \ref{levol}, its $L$-evolute $\varepsilon:I\to \mathbf{E}_1^3$ is a null curve in  $\mathbf{E}_1^3$. Conversely, any null curve $\varepsilon=(\varepsilon_1,\varepsilon_2,\varepsilon_3):I\to \mathbf{E}_1^3$ is the $L$-evolute of some plane curve $\gamma:I\to \mathbf{E}^2$ if $\varepsilon_3$ is nonvanishing on $I$.
\end{prop}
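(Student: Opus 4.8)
The plan is to treat the two directions separately, reducing everything to the planar Frenet apparatus and the classical evolute--involute duality recalled in Section 2. For the direct statement, I would first differentiate $\gamma_\varepsilon=\gamma+u\mathbf{n}$ and simplify using the Frenet equations $\dot\gamma=|\dot\gamma|\mathbf{t}$, $\dot{\mathbf{n}}=-|\dot\gamma|k\mathbf{t}$ together with the identity $uk=1$. The two tangential terms cancel and one is left with $\dot\gamma_\varepsilon=\dot u\,\mathbf{n}$. Hence $\dot\varepsilon=(\dot u\,\mathbf{n},\dot u)$, whose Lorentzian square is $\dot u^2|\mathbf{n}|^2-\dot u^2=0$ because $\mathbf{n}$ is a Euclidean unit vector; this is exactly the lightlike condition $\dot\varepsilon\cdot\dot\varepsilon=0$. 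Regularity of $\varepsilon$ follows since $\dot u=-\dot k/k^2\neq0$ by hypothesis, and the same fact makes $\dot\varepsilon,\ddot\varepsilon$ linearly independent (in a dependence the equal third components would force the $\mathbf{t}$-term of $\ddot\varepsilon$ to vanish, contradicting $\dot u\,|\dot\gamma|\,k\neq0$), so $\varepsilon$ is a null curve in the sense of Section \ref{secnul}.

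For the converse, I would write $\sigma=(\varepsilon_1,\varepsilon_2)$ and $u=\varepsilon_3$. The null condition gives $|\dot\sigma|=|\dot u|$, so $u$ is, up to sign and an additive constant, an arc-length parameter of $\sigma$; moreover $\dot u\neq0$ on $I$ (otherwise $\dot\varepsilon=0$), so $\sigma$ is regular. A key intermediate step is that $\sigma$ has nonvanishing curvature. Differentiating the null condition yields the planar relation $\dot\sigma\cdot\ddot\sigma=\dot u\,\ddot u$; if $\ddot\sigma$ were parallel to $\dot\sigma$, say $\ddot\sigma=\mu\dot\sigma$, then this relation would give $\mu\dot u^2=\dot u\,\ddot u$, hence $\ddot u=\mu\dot u$ and therefore $\ddot\varepsilon=\mu\dot\varepsilon$, contradicting the standing linear independence of $\dot\varepsilon,\ddot\varepsilon$. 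Consequently $\det(\dot\sigma,\ddot\sigma)\neq0$ and the involute of $\sigma$ is a well-defined regular plane curve.

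I would then take $\gamma$ to be an involute of $\sigma$, choosing the string-length constant so that the radius of curvature of $\gamma$ equals $u$ and, invoking the orientation dependence noted after Definition \ref{levol}, choosing the orientation so that the sign is the correct one. By the classical fact that the evolute of an involute is the original curve, the evolute of $\gamma$ is $\sigma$, while the arc-length formula for the evolute identifies its radius of curvature with $u=\varepsilon_3$; thus the $L$-evolute of $\gamma$ is $(\sigma,u)=\varepsilon$. Finally I would confirm that $\gamma$ meets the hypotheses of Definition \ref{levol}: its curvature $k_\gamma=\pm1/u$ never vanishes because $\varepsilon_3$ is finite, and $\dot k_\gamma$ never vanishes because $\dot u\neq0$.

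The direct inclusion is a short computation, so I expect the real work to lie in the converse. The delicate points there are matching the radius of curvature of the reconstructed involute with $\varepsilon_3$ up to the correct sign (handled through the choice of string-length constant and the orientation freedom) and checking that the null hypotheses on $\varepsilon$ transfer to exactly the nonvanishing-curvature and regularity requirements that Definition \ref{levol} demands of $\gamma$.
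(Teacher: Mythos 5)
Your proposal is correct and follows essentially the same route as the paper: the forward direction is the computation $\dot\varepsilon=\dot u(\mathbf{n},1)$ with $(\mathbf{n},1)$ lightlike and $\dot u\neq 0$, and the converse reparameterizes by $u=\varepsilon_3$ (using that $\dot\varepsilon_3\neq 0$ for a regular null curve), observes $(\varepsilon_1,\varepsilon_2)$ is unit speed, and recovers $\gamma$ as an involute, with the sign of $k=\pm 1/u$ absorbed by an orientation choice. Your extra checks (nonvanishing curvature of the projection via the standing independence of $\dot\varepsilon,\ddot\varepsilon$, and that $\gamma$ satisfies the hypotheses of the definition) are left implicit in the paper but are consistent with it.
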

\begin{proof}
  Taking into account the Frenet equations for the regular plane curve $\gamma$, we have $$\dot \varepsilon=(\dot \gamma_\varepsilon,\dot u)=(\dot{\gamma}+\dot{u}\mathbf{n}+u\dot{\mathbf{n}},\dot u )=\dot{u}(\mathbf{n},1).$$
  Hence $\dot\varepsilon \cdot \dot\varepsilon=0$, since $(\mathbf{n},1)$ is a lightlike vector. Moreover, $\varepsilon$ is regular, since $\dot u=-\dot k/k^2\neq 0$ on $I$. Hence $\varepsilon$ is a null curve.

  Conversely, let $\varepsilon$ be a null curve in $\mathbf{E}_1^3$ with parameter $t$.  We must have $\dot\varepsilon_3(t)\neq 0$ for all $t$: as a matter of fact, if $\dot\varepsilon_3(t_0)=0$ for some $t_0$, then $\dot\varepsilon_1(t_0)^2+\dot\varepsilon_2(t_0)^2=0,$ since $\varepsilon$ is null;
 hence $\dot\varepsilon(t_0)=0$ and $\varepsilon$ is not regular, which is a contradiction.
   This means that we can reparameterize $\varepsilon$ with the parameter $u=\varepsilon_3(t)$ and  assume that  $\varepsilon$ is of the form
  $\varepsilon(u)=(\varepsilon_1(u),\varepsilon_2(u),u)$. By hypothesis, $u\neq 0$. Consider the plane curve $\gamma$  defined by $\gamma(u)=\gamma_\varepsilon(u)-u\dot\gamma_\varepsilon(u),$ where $\gamma_\varepsilon=(\varepsilon_1,\varepsilon_2)$.  Observe that $\gamma_\varepsilon$ is a unit speed curve:  since  $\dot\varepsilon(u)=(\dot\varepsilon_1(u),\dot\varepsilon_2(u),1)$ and  $\dot\varepsilon\cdot \dot\varepsilon=0$, we get $|\dot\gamma_\varepsilon|=1$.
 Hence $\gamma$ is an involute of $\gamma_\varepsilon$; and, consequently, $\gamma_\varepsilon$ is the evolute of $\gamma$. In particular, the curvature $k$ of $\gamma$ satisfies $k=\pm1/u$.
If $k=1/u$, then $\varepsilon$ is the $L$-evolute of $\gamma$. If  $k=-1/u$, then $\varepsilon$ is the $L$-evolute of a orientation reversing reparameterization $\bar{\gamma}$ of $\gamma$.

\end{proof}

To make it clear, throughout the rest of this paper, we will assume that
\begin{enumerate}
  \item[a)] all plane curves are regular, with nonvanishing $k$ and $\dot k$;
  \item[b)] all null curves, and the $L$-evolutes in particular, are such that $\dot\varepsilon$ and $\ddot\varepsilon$ are everywhere linearly independent -- in particular, we are excluding null straight lines in $\mathbf{E}_1^3$ and we can always reparameterize by pseudo-arc.
\end{enumerate}

\begin{prop}
 Let $t$ be an arc length parameter of $\gamma$ and $\varepsilon$ the $L$-evolute of $\gamma$. Let $s$ be a pseudo-arc parameter of $\varepsilon$, where $t=\phi(s)$ and $\phi$ is a solution of \eqref{genpseudoarc}.  Then
\begin{equation}\label{ts}
\dot\phi(s)=\pm\sqrt{\Big|\frac{u(\phi(s))}{\dot u(\phi(s))}\Big|},
\end{equation}
where $u=1/k$ is the radius of curvature of $\gamma$.
\end{prop}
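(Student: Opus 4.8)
The plan is to differentiate the $L$-evolute twice with respect to the arc-length parameter $t$ of $\gamma$, compute the Lorentzian norm of the resulting spacelike vector $\ddot\varepsilon$, and then feed this into the defining equation \eqref{genpseudoarc} of the pseudo-arc parameter. Since $t$ is now an arc-length parameter we have $|\dot\gamma|=1$, so the plane Frenet equations reduce to $\dot\gamma=\mathbf{t}$ and $\dot{\mathbf{n}}=-k\mathbf{t}$. Substituting these into the expression already obtained in the proof of Proposition \ref{null-evolute}, together with the identity $uk=1$, gives $\dot\varepsilon=\dot u\,(\mathbf{n},1)$, which is the starting point of the computation.

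Next I would differentiate once more. Using $\dot{\mathbf{n}}=-k\mathbf{t}$ one obtains $\ddot\varepsilon=(\ddot u\,\mathbf{n}-\dot u\,k\,\mathbf{t},\,\ddot u)$. The key step is the Lorentzian self-product $\ddot\varepsilon\cdot\ddot\varepsilon$: because $\mathbf{t}$ and $\mathbf{n}$ are Euclidean-orthonormal, the spatial part contributes $\ddot u^2+(\dot u\,k)^2$, while the timelike third coordinate contributes $-\ddot u^2$, and the two $\ddot u^2$ terms cancel. Hence $\ddot\varepsilon\cdot\ddot\varepsilon=(\dot u\,k)^2$, so $\ddot\varepsilon$ is indeed spacelike, and using $k=1/u$ this becomes $|\ddot\varepsilon|=|\dot u/u|$, expressed purely in terms of the radius of curvature and its derivative.

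Finally, substituting $|\ddot\varepsilon\circ\phi|=|\dot u(\phi(s))/u(\phi(s))|$ into \eqref{genpseudoarc} and taking reciprocal square roots yields $\dot\phi(s)=\pm\sqrt{\,|u(\phi(s))/\dot u(\phi(s))|\,}$, which is exactly the claimed formula \eqref{ts}. I do not expect a serious obstacle here: the only point requiring genuine care is the cancellation of the $\ddot u$ contributions coming from the spacelike part and the timelike third coordinate under the Lorentzian metric. This cancellation is precisely what forces $|\ddot\varepsilon|$ to be governed solely by the tangential term $\dot u\,k=\dot u/u$, and it is what ties the pseudo-arc reparameterization to the ratio $u/\dot u$.
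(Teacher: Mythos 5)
Your proof is correct and follows essentially the same route as the paper: both arguments reduce to computing the Lorentzian norm of the second derivative of the $L$-evolute and exploiting that the $(\mathbf{n},1)$-component is lightlike and orthogonal to $(\mathbf{t},0)$, so only the term $\dot u\,k\,(\mathbf{t},0)$ survives. The only cosmetic difference is that you evaluate $|\ddot\varepsilon|=|\dot u/u|$ in the parameter $t$ and then invoke the defining equation \eqref{genpseudoarc}, whereas the paper differentiates the reparameterized curve $\upsilon=\varepsilon\circ\phi$ and imposes $|\ddot\upsilon|=1$; these are equivalent by the definition of pseudo-arc.
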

\begin{proof}
By hypothesis, the null curve  $\upsilon=\varepsilon\circ \phi$ satisfies $|\ddot\upsilon|=1$. Taking into account the Frenet equations for plane curves and the chain rule, we have
\begin{equation}\label{n1}
\dot\upsilon(s)=\dot\phi(s)\dot\varepsilon(\phi(s))=\dot\phi(s)\dot u(\phi(s))\big(\mathbf{n}(\phi(s)),1\big)\end{equation}
and
\begin{align}\nonumber\ddot{\upsilon}(s)=\big\{\ddot{\phi}(s)\dot{u}(\phi(s))+\dot\phi(s)^2\ddot{u}&(\phi(s))\big\}\big(\mathbf{n}(\phi(s)),1\big)\\&-
\dot\phi(s)^2k(\phi(s))\dot{u}(\phi(s))\big(\mathbf{t}(\phi(s)),0\big)\label{n2}\end{align}
Since $(\mathbf{n},1)$  is a lightlike vector orthogonal to $(\mathbf{t},0)$  it follows that
\begin{equation}\label{n3}
1=|\ddot{\upsilon}(s)|=\dot\phi(s)^4\big(k(\phi(s))\dot{u}(\phi(s))\big)^2,\end{equation}
from which we deduce \eqref{ts}.
\end{proof}

\begin{defn}
Two plane curves $\gamma$ and $\bar{\gamma}$, with $L$-evolutes $\varepsilon$ and $\bar\varepsilon$, respectively,  are said to be \emph{Laguerre congruent} if the corresponding families of osculating circles are related by a Laguerre transformation, that is, if (up to reparameterization) $\bar\varepsilon=\lambda A\varepsilon+\vec{t}$ for some $\lambda\neq 0$, $A\in O_1(3)$, and  $\vec{t}\in \mathbf{E}_1^3$.
\end{defn}

The identification $\mathbf{E}^2\cong\{(u_1,u_2,u_3)\in \mathbf{E}_1^3|\,u_3=0\}$  induces a natural embedding of  the group $\mathbf{Iso}^+(2)=\mathbb{R}^2\rtimes SO(2)$ of all rigid motions of $\mathbf{E}^2$ in the group $\mathcal{L}$ of Laguerre transformations. The subgroup of  $\mathcal{L}$ generated by $\mathbf{Iso}^+(2)$ together with the translation group of $\mathbf{E}_1^3$ will be denoted by  $\mathcal{L}_I$.
We point out that if $L\in \mathcal{L}_I$ corresponds to a translation along the timelike axis $\vec{e}_3=(0,0,1)$, that is, $L(\vec{u})=\vec{u}+\alpha \vec{e}_3$ for some real number $\alpha$,  then
the projections of $\varepsilon$ and $\bar \varepsilon= L(\varepsilon)$ into the Euclidean plane $\mathbf{E}^2$ coincide, which implies that $\gamma$ and $\bar\gamma$  are involutes of the same curve and, consequently, they are parallel:
$\bar\gamma=\gamma+\alpha \mathbf{n}$ where $\mathbf{n}$ is the unit normal vector of $\bar \gamma$.

We also have the following.
 \begin{thm}\label{homoex1}
Two plane curves $\gamma$ and $\bar{\gamma}$ are Laguerre congruent if, and only if, the pseudo-torsions $\tau$
and $\bar{\tau}$ of $\varepsilon$ and $\bar\varepsilon$, respectively, are related by  \eqref{pseudodilation} for some $\lambda\neq 0$.
\end{thm}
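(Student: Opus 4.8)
The plan is to reduce everything to the behaviour of the pseudo-torsion under the three generators of the Laguerre group: linear Lorentz isometries, translations, and dilations. The crucial inputs are already available: the pseudo-arc parameter and the pseudo-torsion are invariant under Lorentz isometries and translations (as recalled just before Proposition \ref{homoex}, and as is clear since these do not affect $\ddot\varepsilon$ nor the relation between $\dot{\mathbf{N}}$ and $\mathbf{T}$), while a dilation $\varepsilon\mapsto\lambda\varepsilon$ rescales the pseudo-arc and transforms the pseudo-torsion exactly by \eqref{pseudodilation}, which is the content of Proposition \ref{homoex}. Since a general Laguerre transformation $L(\vec u)=\lambda A\vec u+\vec t$ factors as the dilation $\vec u\mapsto\lambda\vec u$ followed by the affine isometry $\vec u\mapsto A\vec u+\vec t$, the net effect on the pseudo-torsion must be precisely \eqref{pseudodilation}.

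For the forward implication I would assume $\bar\varepsilon=\lambda A\varepsilon+\vec t$ (up to reparameterization) and write $\bar\varepsilon=A(\lambda\varepsilon)+\vec t$. Because $A\in O_1(3)$ and the translation by $\vec t$ preserve both the pseudo-arc parameter and the pseudo-torsion, the pseudo-torsion of $\bar\varepsilon$ coincides with that of the dilated curve $\lambda\varepsilon$. Proposition \ref{homoex} then identifies the pseudo-torsion of $\lambda\varepsilon$ with $\tfrac{1}{|\lambda|}\tau(\,\cdot\,/\sqrt{|\lambda|})$, which is exactly relation \eqref{pseudodilation} between $\bar\tau$ and $\tau$.

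For the converse I would assume that $\tau$ and $\bar\tau$ satisfy \eqref{pseudodilation} for some $\lambda\neq0$ and apply Proposition \ref{homoex} to $\varepsilon$: the curve $\lambda\varepsilon$ is null, is parameterized by the pseudo-arc $\bar s=\sqrt{|\lambda|}\,s$, and has pseudo-torsion $\tfrac{1}{|\lambda|}\tau(\bar s/\sqrt{|\lambda|})$, which by hypothesis is the very same function as $\bar\tau$. Thus $\lambda\varepsilon$ and $\bar\varepsilon$ are two null curves parameterized by pseudo-arc with identical pseudo-torsion, so by the uniqueness statement of Section \ref{secnul} (where the integration of the frame leaves exactly the freedom of a linear map $A\in O_1(3)$ and of a translation $\vec t$) they differ by a Lorentz isometry together with a translation, giving $\bar\varepsilon=A(\lambda\varepsilon)+\vec t=\lambda A\varepsilon+\vec t$. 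This is precisely the definition of Laguerre congruence, so $\gamma$ and $\bar\gamma$ are Laguerre congruent.

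The only genuine delicacy — more bookkeeping than obstacle — is to ensure that the two pseudo-torsions are compared as functions of correctly matched pseudo-arc parameters, since Laguerre congruence is defined only up to reparameterization whereas the pseudo-torsion is defined relative to a pseudo-arc parameter. I would remove this ambiguity by fixing pseudo-arc parameters for $\varepsilon$ and $\bar\varepsilon$ at the outset (legitimate by the standing assumption b)) and by tracking the reparameterization $\bar s=\sqrt{|\lambda|}\,s$ induced by the dilation throughout, so that the functional identity \eqref{pseudodilation} is unambiguous and the uniqueness theorem applies directly.
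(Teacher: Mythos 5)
Your proposal is correct and follows essentially the same route as the paper: invariance of pseudo-arc and pseudo-torsion under Lorentz isometries and translations, Proposition \ref{homoex} for the dilation factor, and the uniqueness (up to Lorentz isometry) of a null curve with prescribed pseudo-torsion for the converse. You have merely written out in more detail the decomposition and the parameter bookkeeping that the paper's one-sentence proof leaves implicit.
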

\begin{proof}
Taking into account that the pseudo-torsion and the pseudo-arc parameter are invariant under Lorentz isometries and that, for dilations, \eqref{pseudodilation} holds, the assertion follows from the fact that the pseudo-torsion determines the null curve up to Lorentz isometry, as observed in Section \ref{secnul}.
\end{proof}

\subsection{The Tait theorem for osculating circles of a plane curve}

The correspondence between null curves and curves in the Euclidean plane allows one to relate an old theorem by P.G. Tait on the osculating circles of a plane curve and the following property for null curves observed by L.K. Graves.
  \begin{prop}[Graves, \cite{graves}]
   A null curve $\varepsilon$ starting at $P$ lies in the inside of the light cone $C_P$.
 \end{prop}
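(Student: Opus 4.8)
The plan is to work directly with the Cartan frame and the Frenet equations \eqref{freneteqspace}, since these encode everything about a null curve parameterized by pseudo-arc. Without loss of generality I would place $P$ at the origin, so that the light cone becomes $C_P=\{X\,|\,X\cdot X=0\}$, and reparameterize $\varepsilon$ by pseudo-arc starting at $s=0$ with $\varepsilon(0)=0$. The claim that $\varepsilon$ lies ``inside'' $C_P$ should be interpreted as the statement that $\varepsilon(s)\cdot\varepsilon(s)<0$ for all $s\neq 0$ near the start (equivalently, $\varepsilon(s)$ is timelike, lying in the interior of the cone), and the natural strategy is to study the scalar function $g(s)=\varepsilon(s)\cdot\varepsilon(s)$ and show it is strictly negative for $s>0$ small, then propagate.

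First I would compute the derivatives of $g$. Since $\mathbf{T}=\dot\varepsilon$ is lightlike we have $\dot g=2\,\varepsilon\cdot\dot\varepsilon=2\,\varepsilon\cdot\mathbf{T}$, and differentiating again using $\dot{\mathbf{T}}=\mathbf{N}$ from \eqref{freneteqspace} gives $\ddot g=2\,\mathbf{T}\cdot\mathbf{T}+2\,\varepsilon\cdot\mathbf{N}=2\,\varepsilon\cdot\mathbf{N}$, because $\mathbf{T}\cdot\mathbf{T}=0$. Differentiating a third time, and using $\dot{\mathbf{N}}=\tau\mathbf{T}+\mathbf{B}$ together with $\mathbf{T}\cdot\mathbf{N}=0$, yields $\dddot g=2\,\mathbf{T}\cdot\mathbf{N}+2\,\varepsilon\cdot(\tau\mathbf{T}+\mathbf{B})=2\tau\,(\varepsilon\cdot\mathbf{T})+2\,(\varepsilon\cdot\mathbf{B})$. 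At $s=0$ we have $\varepsilon(0)=0$, hence $g(0)=\dot g(0)=\ddot g(0)=\dddot g(0)=0$, so I would push to the fourth derivative. Differentiating $\ddot g=2\,\varepsilon\cdot\mathbf{N}$ twice more and evaluating at $s=0$, the terms containing $\varepsilon$ vanish and what survives is governed by $\mathbf{T}\cdot\mathbf{N}=0$ and the single nonzero pairing $\mathbf{T}\cdot\mathbf{B}=-1$; a short computation gives $g^{(4)}(0)=2\,\mathbf{N}\cdot\mathbf{N}+6\,\mathbf{T}\cdot\mathbf{B}=2-6=-4<0$, where I used $|\mathbf{N}|=1$ so $\mathbf{N}\cdot\mathbf{N}=1$.

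With the first nonvanishing derivative at the start being $g^{(4)}(0)=-4<0$, Taylor's theorem gives $g(s)=-\tfrac{1}{6}s^4+o(s^4)$, so $g(s)<0$ for all sufficiently small $s\neq 0$; that is, $\varepsilon(s)$ enters the interior of the cone immediately. To upgrade this local statement to the global one I would argue that $g$ can never return to zero: the curve $\varepsilon$ is regular and, by assumption b) in the text, $\dot\varepsilon$ and $\ddot\varepsilon$ are everywhere independent, so $\varepsilon$ cannot become tangent to the cone in a way that lets it escape. Concretely, if $g(s_1)=0$ for some first $s_1>0$, then at $s_1$ we would need $\dot g(s_1)\geq 0$, i.e. $\varepsilon(s_1)\cdot\mathbf{T}(s_1)\geq 0$; combining the constraints $\varepsilon(s_1)\cdot\varepsilon(s_1)=0$ (so $\varepsilon(s_1)$ is lightlike) with $\dot\varepsilon(s_1)=\mathbf{T}(s_1)$ lightlike and the sign of $\dot g$ should force a degeneracy contradicting the nondegeneracy of the Cartan frame.

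The main obstacle I expect is precisely this global propagation step: the local Taylor computation is routine, but ruling out re-intersection with the cone requires a genuine monotonicity or convexity argument rather than just a sign at the origin. The cleanest route is probably to show that on the interior region the function $g$ is monotone decreasing, or to exploit that $\varepsilon$ winds strictly inside the cone because its tangent $\mathbf{T}$ remains lightlike and future-pointing (or past-pointing) throughout, so $\varepsilon$ is a causal curve and a standard causality argument in $\mathbf{E}_1^3$ confines a causal curve started at the vertex to the closed cone, with the strict independence of $\dot\varepsilon,\ddot\varepsilon$ promoting ``closed'' to ``interior.'' I would present the Taylor estimate as the core, and invoke the causal-character argument to close the global claim.
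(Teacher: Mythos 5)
The paper does not actually prove this proposition: it is quoted from Graves's paper \cite{graves}, so there is no internal argument to compare yours against. Judged on its own, your proposal contains the right ingredients, but the decisive step is exactly the one you leave as a sketch. A minor point first: your Taylor coefficient is off. With $\varepsilon(0)=0$ one has $\varepsilon(s)=s\mathbf{T}_0+\tfrac{s^2}{2}\mathbf{N}_0+\tfrac{s^3}{6}(\tau_0\mathbf{T}_0+\mathbf{B}_0)+O(s^4)$, so $g(s)=\varepsilon\cdot\varepsilon=\big(\tfrac14\mathbf{N}_0\cdot\mathbf{N}_0+\tfrac13\mathbf{T}_0\cdot\mathbf{B}_0\big)s^4+O(s^5)=-\tfrac{1}{12}s^4+O(s^5)$, i.e. $g^{(4)}(0)=-2$, not $-4$. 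The sign, which is all you use, is unaffected.

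The genuine gap is the global statement. Your ``first return'' argument does not close as written: knowing $\dot g(s_1)\ge 0$ at a first return does not by itself produce a contradiction. The causal/convexity argument you gesture at in the last paragraph is the one that works, and it makes the Taylor computation superfluous. Concretely: since $\dot\varepsilon$ is lightlike and never zero, $\dot\varepsilon_3$ never vanishes (otherwise $\dot\varepsilon_1^2+\dot\varepsilon_2^2=0$ too), so $\dot\varepsilon(v)$ is future-pointing lightlike for all $v$ (or past-pointing for all $v$). For future-pointing lightlike $u,v$, Cauchy--Schwarz in the first two coordinates gives $u\cdot v\le 0$ with equality iff $u$ and $v$ are proportional. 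Hence
\[
\big(\varepsilon(s)-P\big)\cdot\big(\varepsilon(s)-P\big)=\int_{0}^{s}\!\!\int_{0}^{s}\dot\varepsilon(v)\cdot\dot\varepsilon(w)\,dv\,dw\le 0,
\]
with equality only if all the $\dot\varepsilon(v)$ are mutually proportional, i.e. only if $\varepsilon$ is a null line --- excluded by the standing assumption that $\dot\varepsilon$ and $\ddot\varepsilon$ are independent. Thus $\varepsilon(s)-P$ is timelike for all $s\ne 0$, and the sign of its third component (that of $\dot\varepsilon_3$) gives the upper/lower refinement the paper states afterwards. If you insist on the first-return route, you must supply the missing contradiction, for instance: at a first return $\varepsilon(s_1)-P$ and $\mathbf{T}(s_1)$ are both future-pointing lightlike, so $\dot g(s_1)=2(\varepsilon(s_1)-P)\cdot\mathbf{T}(s_1)\le 0$; combined with $\dot g(s_1)\ge 0$ this forces $\varepsilon(s_1)-P=c\,\mathbf{T}(s_1)$ with $c>0$, whence $\ddot g(s_1)=2c\,\mathbf{T}\cdot\mathbf{N}=0$ and $\dddot g(s_1)=2c\,\mathbf{T}\cdot\mathbf{B}=-2c<0$, so $g>0$ just before $s_1$, contradicting the definition of a first return. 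Either way, the causal-character input is indispensable; as submitted, your proof is incomplete at precisely this point.
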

If  $\varepsilon=(\varepsilon_1,\varepsilon_2,\varepsilon_3)$ is a null curve with $\varepsilon(t_0)=P$, then either $\varepsilon(t)$ lies in the inside  the upper part of the light cone $C_{P}$ for all $t>t_0$ or  $\varepsilon(t)$ lies in the inside  of the lower part of the light cone $C_{P}$ for all $t>t_0$. Consequently, in both cases,  the circle associated to $P=\varepsilon(t_0)$ under the isotropic projection  does not intersect the circle associated to $\varepsilon(t)$ for all $t>0$ (see Figure \ref{tait}). This implies the following theorem.
\begin{figure}[!htb]
\centering
\includegraphics[width=7.5cm,height=4.5cm]{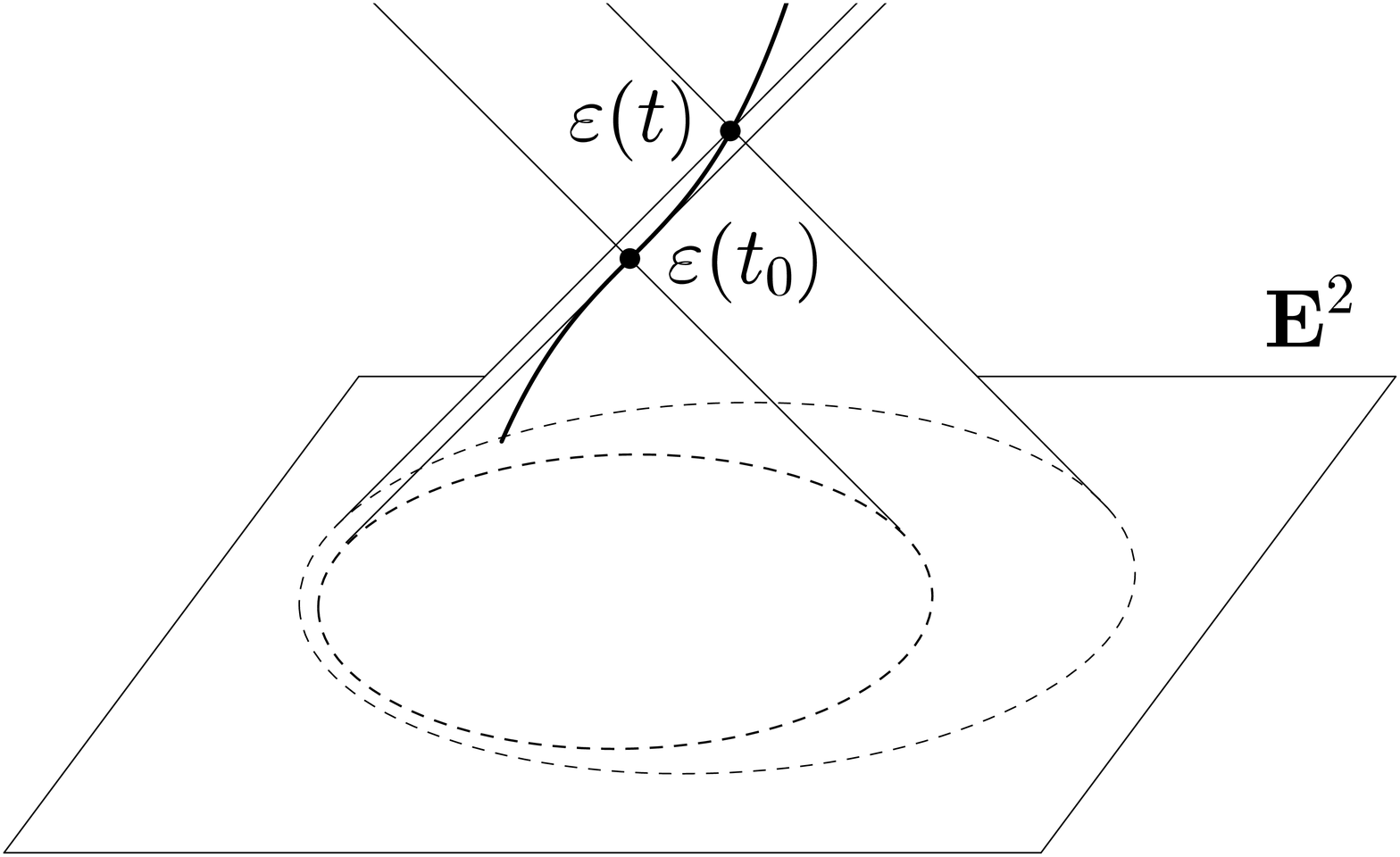}
\caption{}
\label{tait}
\end{figure}

\begin{prop}[Tait,\cite{tait}]
   The osculating circles of a curve with monotonic positive curvature
are pairwise disjoint and nested.
\end{prop}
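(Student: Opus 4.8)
The plan is to deduce Tait's theorem directly from Graves' proposition, by translating the Lorentzian statement ``inside the light cone'' into the Euclidean statement ``disjoint and nested'' via the isotropic projection. First I would set up the correspondence. Since $\gamma$ has strictly monotonic positive curvature $k$, the signed radius of curvature $u=1/k$ is a positive, strictly monotonic function, so $\dot k$ (equivalently $\dot u$) is nonvanishing and the $L$-evolute $\varepsilon=(\gamma_\varepsilon,u)$ is a well-defined null curve by Proposition \ref{null-evolute}. By definition of the isotropic projection, for each $t$ the point $\varepsilon(t)\in\mathbf{E}_1^3$ corresponds to the osculating circle of $\gamma$ at $t$, whose center is the evolute point $\gamma_\varepsilon(t)$ and whose radius is $|u(t)|=u(t)$; note that all these circles carry the same (anti-clockwise) orientation because $u>0$ throughout.

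Next I would fix two parameter values $t_0<t$ in $I$ and apply Graves' proposition to the null curve $\varepsilon$ restricted to $[t_0,t]$, with $P=\varepsilon(t_0)$: the point $\varepsilon(t)$ lies in the inside of the light cone $C_P$, that is, the vector $\varepsilon(t)-P$ is timelike,
\[
\big(\varepsilon(t)-P\big)\cdot\big(\varepsilon(t)-P\big)<0.
\]
Writing $d(t)=|\gamma_\varepsilon(t)-\gamma_\varepsilon(t_0)|$ for the Euclidean distance between the two centers and expanding the Lorentzian product, this inequality becomes $d(t)^2-\big(u(t)-u(t_0)\big)^2<0$, i.e.
\[
d(t)<\big|u(t)-u(t_0)\big|.
\]

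Finally I would interpret this inequality geometrically. The two osculating circles have radii $u(t)$ and $u(t_0)$, both positive, so $|u(t)-u(t_0)|$ is exactly the absolute difference of their radii. The relation $d(t)<|u(t)-u(t_0)|$ is therefore precisely the classical condition for one circle to lie strictly inside the other, i.e.\ for the two circles to be disjoint and nested. Since $t_0<t$ were arbitrary, the osculating circles are pairwise disjoint and nested, which is the assertion.

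I expect the only delicate point to be the sign bookkeeping in the last step: the conclusion ``nested'' rather than ``externally disjoint'' hinges on both third coordinates $u(t_0)$ and $u(t)$ having the \emph{same} sign, so that the timelike condition reads as a difference of radii $|r_0-r|$ and not as a sum $r_0+r$ (which would instead signal external disjointness). This is exactly where the hypothesis of positive, single-signed curvature enters, and I would be careful to make it explicit.
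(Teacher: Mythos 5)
Your proof is correct and follows essentially the same route as the paper: apply Graves' proposition to the $L$-evolute and translate ``$\varepsilon(t)$ lies inside the light cone $C_{\varepsilon(t_0)}$'' through the isotropic projection into the nesting condition $d<|u(t)-u(t_0)|$ for the osculating circles. You merely make explicit the expansion of the Lorentzian square and the sign bookkeeping that the paper leaves to Figure \ref{tait}.
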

For some variations on the P.G. Tait result, see \cite{osculating}.

\subsection{The potential function} Let $\gamma$ be a regular  curve in $\mathbf{E}^2$  with arc length parameter $t$ and (signed) radius of curvature $u$.
 Observe that the sign of $u\dot u$ is changed if the orientation of $\gamma$ is reversed.
\begin{defn}\label{defpotential}
Take an arc length parameter $t$ of $\gamma$ such that $u\dot u>0$. In \eqref{ts}, choose $$\mathrm{sign}(\dot \phi)=\mathrm{sign}(\dot{u})=\mathrm{sign}(u)$$ and let $s$ be the corresponding pseudo-arc parameter of the $L$-evolute of $\gamma$.  The \emph{potential function} of $\gamma$ is the (positive) function
  \begin{equation}\label{potential}
  f(s)=\dot\phi(s)\dot{u}(\phi(s))=\sqrt{u(\phi(s))\dot{u}(\phi(s))}.
  \end{equation}
\end{defn}

\begin{prop}
 If $f$ is the potential function of $\gamma$, the pseudo-torsion $\tau$ of $\varepsilon$ is given by
  \begin{equation}\label{pseudotorsionf}
\tau=\frac{1}{f}\ddot f-\frac{1}{2f^2}\big(\dot f^2+1 \big).
\end{equation}
\end{prop}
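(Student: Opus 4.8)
The plan is to work with the pseudo-arc reparameterization $\upsilon=\varepsilon\circ\phi$ and to build its Cartan frame explicitly in terms of the Frenet frame $\{\mathbf{t},\mathbf{n}\}$ of $\gamma$, reading off $\tau$ at the end. From \eqref{n1} and Definition \ref{defpotential}, the tangent vector is
\[
\mathbf{T}=\dot\upsilon=f\,(\mathbf{n},1),
\]
which is lightlike since $(\mathbf{n},1)\cdot(\mathbf{n},1)=\mathbf{n}\cdot\mathbf{n}-1=0$. The whole computation will hinge on one algebraic identity: under the sign conventions of Definition \ref{defpotential} we have $u\dot u>0$, hence $\dot\phi^2=u/\dot u$ by \eqref{ts}, and since $k=1/u$,
\[
f\,\dot\phi\,k=\dot\phi^2\,\dot u\,k=\frac{u}{\dot u}\,\dot u\,\frac1u=1 .
\]
I expect this relation to be what makes every subsequent expression collapse. (One could instead substitute the third component $f_3=f$ of $\mathbf{T}$ into \eqref{taudet}, but that produces a differential relation linking $\tau$ and $\dot\tau$ rather than a closed formula, so the direct frame computation is cleaner.)

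Next I would differentiate $\mathbf{T}$ with respect to $s$ to obtain the normal $\mathbf{N}=\dot{\mathbf{T}}$. Using the plane Frenet equations in the form $\tfrac{d}{ds}\mathbf{n}=-\dot\phi k\,\mathbf{t}$ and $\tfrac{d}{ds}\mathbf{t}=\dot\phi k\,\mathbf{n}$ together with $f\dot\phi k=1$, this should give the clean expression
\[
\mathbf{N}=\dot f\,(\mathbf{n},1)-(\mathbf{t},0),
\]
and one checks $\mathbf{N}\cdot\mathbf{N}=1$ directly from $(\mathbf{n},1)\cdot(\mathbf{n},1)=0$, $(\mathbf{n},1)\cdot(\mathbf{t},0)=0$ and $(\mathbf{t},0)\cdot(\mathbf{t},0)=1$, reconfirming that $s$ is pseudo-arc in agreement with \eqref{n3}.

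Finally I would determine the binormal $\mathbf{B}$ and extract $\tau$. The efficient device is to introduce the adapted null frame $\{(\mathbf{n},1),\,\mathbf{q},\,(\mathbf{t},0)\}$ of $\mathbf{E}_1^3$, where $\mathbf{q}=(-\tfrac12\mathbf{n},\tfrac12)$ is the lightlike vector normalized so that $(\mathbf{n},1)\cdot\mathbf{q}=-1$ and $\mathbf{q}\cdot(\mathbf{t},0)=0$. Writing $\mathbf{B}=\alpha(\mathbf{n},1)+\beta\,\mathbf{q}+\delta(\mathbf{t},0)$ and imposing the three defining conditions $\mathbf{B}\cdot\mathbf{B}=0$, $\mathbf{T}\cdot\mathbf{B}=-1$ and $\mathbf{B}\cdot\mathbf{N}=0$ yields $\beta=1/f$, $\delta=-\dot f/f$ and $\alpha=\dot f^2/(2f)$. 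I would then compute $\dot{\mathbf{N}}$ in this frame, again using $\tfrac{d}{ds}(\mathbf{n},1)=-\tfrac1f(\mathbf{t},0)$ and $\tfrac{d}{ds}(\mathbf{t},0)=\tfrac1f\big(\tfrac12(\mathbf{n},1)-\mathbf{q}\big)$, and invoke the Frenet relation $\dot{\mathbf{N}}=\tau\mathbf{T}+\mathbf{B}$ from \eqref{freneteqspace}. Dotting with $\mathbf{B}$ gives $\tau=-\dot{\mathbf{N}}\cdot\mathbf{B}$; equivalently, $\tau f$ is the difference of the $(\mathbf{n},1)$-coefficients of $\dot{\mathbf{N}}$ and of $\mathbf{B}$. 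Either route delivers exactly \eqref{pseudotorsionf}. The only genuine obstacle is the Lorentzian bookkeeping among the two lightlike directions and the spacelike one, but the identity $f\dot\phi k=1$ keeps all coefficients elementary, so I anticipate no real difficulty beyond careful accounting.
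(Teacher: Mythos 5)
Your proposal is correct and follows essentially the same route as the paper: compute $\mathbf{T}=f(\mathbf{n},1)$ and $\mathbf{N}=\dot f(\mathbf{n},1)-(\mathbf{t},0)$, use the identity $k\dot\phi=1/f$, determine $\mathbf{B}$ from its three defining conditions (your null-frame coefficients $\alpha,\beta,\delta$ reproduce exactly the paper's expression \eqref{bb}), and extract $\tau=-\dot{\mathbf{N}}\cdot\mathbf{B}$. The adapted frame $\{(\mathbf{n},1),\mathbf{q},(\mathbf{t},0)\}$ is only an organizational convenience, not a different argument.
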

\begin{proof}
In view of \eqref{n1}, \eqref{n2} and \eqref{n3},  the tangent and normal vectors of $\upsilon=\varepsilon\circ \phi$ are given by
\begin{equation}\label{tn}
\mathbf{T}(s)=f(s)(\mathbf{n}(\phi(s)),1),\quad \mathbf{N}=\dot f(s) (\mathbf{n}(\phi(s)),1)-(\mathbf{t}(\phi(s)),0).
\end{equation}
From this one can check that that the binormal vector is given by
\begin{equation}\label{bb}
\mathbf{B}(s)=\Big(-\frac{\dot f(s)}{f(s)}\mathbf{t}(\phi(s))+\frac{1}{2f(s)}\{\dot f(s)^2-1\}\mathbf{n}(\phi(s)),\frac{1}{2f(s)}\{\dot f(s)^2+1\}\Big).\end{equation}
On the other hand, differentiating $\mathbf{N}$, we get
$$\dot{\mathbf{N}}(s)=\Big(\big\{\ddot{f}(s)-k(\phi(s))\dot\phi(s)\big\}\mathbf{n}(\phi(s))-\dot f(s)k(\phi(s))\dot\phi(s)\mathbf{t}(\phi(s)),\ddot f(s) \Big).$$
Observe also that
$$ k(\phi(s))\dot\phi(s)=\frac{\dot\phi(s)}{u(\phi(s))}=\frac{\dot\phi(s)\dot u(\phi(s))}{u(\phi(s))\dot u(\phi(s))}=\frac{ 1}{f(s)}.$$
Hence
$$\dot{\mathbf{N}}(s)=\Big(\big\{\ddot{f}(s)-\frac{1}{f(s)}\big\}\mathbf{n}(\phi(s))-\frac{\dot f(s)}{f(s)}\mathbf{t}(\phi(s)),\ddot f(s) \Big).$$
 Since $\tau=-\dot{\mathbf{N}}\cdot \mathbf{B}$, we deduce \eqref{pseudotorsionf}.
\end{proof}

Up to $\mathcal{L}_I$-congruence,  the  curve $\gamma$ and its $L$-evolute can be recovered from its  potential function $f$ as follows.
\begin{thm}\label{fund}
Let $f:(\alpha,\beta)\to\mathbb{R}$ be a positive and differentiable function on the open interval $(\alpha,\beta)$. Take $s_0\in (\alpha,\beta)$ and a constant $b_0$ such that $\int_{s_0}^sf(v)dv + b_0$ is nonvanishing on $(s_0,\beta)$. Set $\theta(s)=\int_{s_0}^s\frac{1}{f(v)}dv$. Then the null curve  $\varepsilon:(s_0,\beta)\to\mathbf{E}_1^3$ given by \begin{equation}\label{evolutegamma1}
      \varepsilon(s)=\int_{s_0}^s\big(\cos\theta(v),\sin\theta(v), 1 \Big)f(v)dv+(0,0,b_0)\end{equation}
has pseudo-arc parameter $s$ and $\varepsilon$ is the $L$-evolute of some regular plane curve $\gamma$ with potential function $f:(s_0,\beta)\to \mathbb{R}$. Up to a rigid motion in $\mathbf{E}^2$, this plane curve is given by
\begin{equation}\label{gamma}
      \gamma(t)=\int_{s_0}^{\phi^{-1}(t)}\big(\cos\theta(v),\sin\theta(v)  \big)\dot \phi(v)dv,\end{equation}
   where the  arc length parameter $t$ of $\gamma$ satisfies $t=\phi(s)$ for some  strictly monotone function $\phi:(s_0,\beta)\to \mathbb{R}$ with derivative
   \begin{equation}\label{parameters}
      \dot\phi(s)=\frac{\int_{s_0}^s f(v)dv+b_0}{f(s)}.
    \end{equation}

 Moreover, if $\bar\gamma$ is another regular plane curve with potential function $f:(s_0,\beta)\to \mathbb{R}$, then  $\bar\gamma$ coincides with $\gamma$ up to rigid motion, for some constant $b$. Consequently, any two plane curves with the same potential function are $\mathcal{L}_I$-congruent. Conversely,
if $\gamma$ and $\bar \gamma$ are $\mathcal{L}_I$-congruent, then they have the same potential function.
\end{thm}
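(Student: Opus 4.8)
The plan is to verify the theorem in three stages: first establish that the curve $\varepsilon$ defined by \eqref{evolutegamma1} is genuinely a null curve parameterized by pseudo-arc with the prescribed potential function; second, recover the underlying plane curve $\gamma$ and confirm its potential function is again $f$; third, prove the uniqueness and $\mathcal{L}_I$-congruence statements. For the first stage I would compute $\dot\varepsilon(s)=f(s)(\cos\theta(s),\sin\theta(s),1)$ directly from the integral formula. Since $(\cos\theta,\sin\theta,1)$ is lightlike for the Lorentzian product, we immediately get $\dot\varepsilon\cdot\dot\varepsilon=0$, so $\varepsilon$ is null and $\dot\varepsilon$ never vanishes because $f>0$. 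To check that $s$ is a pseudo-arc parameter I would differentiate once more, obtaining $\ddot\varepsilon=\dot f(\cos\theta,\sin\theta,1)+f\dot\theta(-\sin\theta,\cos\theta,0)$, and using $\dot\theta=1/f$ the spacelike part has $|\ddot\varepsilon|=1$, as required. Comparison with \eqref{tn} shows that $\mathbf{n}(\phi(s))=(\cos\theta(s),\sin\theta(s))$ and $f$ is indeed the potential function.

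For the second stage, the guiding idea is to invert the construction of Proposition \ref{null-evolute}: from the null curve $\varepsilon$ one reads off its third coordinate as the radius of curvature, reconstructs the evolute $\gamma_\varepsilon$ from the first two coordinates, and takes the appropriate involute. Concretely, the reparameterization $t=\phi(s)$ must send the pseudo-arc parameter to an arc length parameter of $\gamma$; formula \eqref{ts} together with $f=\dot\phi\,\dot u$ and $u=\varepsilon_3$ forces \eqref{parameters}, where the constant $b_0$ encodes the freedom in the third coordinate (equivalently, the choice of involute). I would check that $\phi$ is strictly monotone precisely when $\int_{s_0}^s f\,dv+b_0$ is nonvanishing, which is the stated hypothesis, and then verify that \eqref{gamma} follows by integrating $\dot\gamma=\mathbf{t}(\phi)\dot\phi=J^{-1}\mathbf{n}(\phi)\,\dot\phi$, using the identification $\mathbf{n}(\phi(s))=(\cos\theta,\sin\theta)$ established above so that $\mathbf{t}=(\sin\theta,-\cos\theta)$ or its rotation. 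One then confirms that the curvature of this $\gamma$ equals $1/\varepsilon_3$ and that recomputing the potential function returns $f$.

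For the final congruence statements I would argue as follows. Since $\varepsilon$ is determined up to Lorentz isometry by its pseudo-torsion $\tau$, and by \eqref{pseudotorsionf} the pseudo-torsion depends only on $f$, any two plane curves sharing the potential function $f$ have $L$-evolutes with the same $\tau$, hence $L$-evolutes differing by an element of $O_1(3)$; tracking how the remaining freedom (the constant $b$, corresponding to a timelike translation) affects the plane curve via the discussion preceding Theorem \ref{homoex1} shows the two plane curves are parallel, hence $\mathcal{L}_I$-congruent. For the converse, I would observe that the potential function is built from quantities — the radius of curvature $u$ and the arc length — that are manifestly invariant under the rigid motions and timelike translations generating $\mathcal{L}_I$, so $\mathcal{L}_I$-congruent curves must share $f$.

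The main obstacle I anticipate is bookkeeping the sign and parameter conventions consistently: the orientation dependence of the $L$-evolute noted after Definition \ref{levol}, the $\pm$ choices in \eqref{ts}, and the condition $u\dot u>0$ fixing the sign in Definition \ref{defpotential} all have to be reconciled so that the reconstructed $\gamma$ has the correct orientation and its curvature is $+1/u$ rather than $-1/u$. Getting the role of the constant $b_0$ exactly right — identifying it with the involute-parameter freedom, and hence with the timelike-translation ambiguity that keeps us inside $\mathcal{L}_I$ rather than producing a genuinely Laguerre-inequivalent curve — is the conceptual crux; the surrounding differentiations are routine once the frame identification \eqref{tn} is in hand.
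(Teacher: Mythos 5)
Your first two stages track the paper's own proof closely and are sound: the two differentiations showing that \eqref{evolutegamma1} is null with pseudo-arc parameter $s$, the identification of $\varepsilon_3$ with the radius of curvature via Proposition \ref{null-evolute}, and the derivation of \eqref{parameters} and \eqref{gamma} through the involute construction are exactly the paper's route (the paper then invokes the fundamental theorem of plane curves to conclude that the curve underlying $\varepsilon$ coincides with \eqref{gamma} up to rigid motion).

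The genuine gap is in your third stage. From ``same potential function $\Rightarrow$ same pseudo-torsion $\Rightarrow$ the $L$-evolutes differ by an element of $O_1(3)$ plus a translation'' you can only conclude that $\gamma$ and $\bar\gamma$ are \emph{Laguerre} congruent with $\lambda=1$, not $\mathcal{L}_I$-congruent: the relating isometry could a priori be a boost or a time reversal, which corresponds to no element of $\mathcal{L}_I$ and does not make the plane curves parallel. The ``remaining freedom'' is not just the constant $b$ of a timelike translation; it is all of $O_1(3)$ modulo the embedded $SO(2)$, and the pseudo-torsion alone cannot see the difference. The paper closes this by working directly with the defining relations of the potential function: $f=d\bar u/ds$ and $d\bar t/ds=\bar u/f$ integrate to \eqref{baru}, so with $b_0=\bar u(\bar\phi(s_0))$ the two plane curves have the same curvature as functions of the same arc-length parameter and hence differ by a plane rigid motion; only then does one see that the isometry between the $L$-evolutes acts on the first two coordinates alone. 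You actually have the needed ingredient in stage 2 --- the explicit tangent $f(s)(\cos\theta(s),\sin\theta(s),1)$, whose third component is $f$ and pins the isometry down to a rotation of the $(x,y)$-plane --- but the detour through pseudo-torsion as written does not deliver the conclusion. A smaller defect in your converse: the radius of curvature and the arc length are \emph{not} invariant under the timelike translations generating $\mathcal{L}_I$ (the parallel curve $\gamma+\alpha\mathbf{n}$ has radius of curvature $u+\alpha$ and a different arc-length parameter); what is invariant is the pseudo-arc $s$ and the derivative $du/ds=f$, which is precisely the computation the paper performs.
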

\begin{proof}
Differentiating \eqref{evolutegamma1} we get
$$\dot\varepsilon (s)=f(s)(\cos\theta(s),\sin\theta(s),1).$$ From this we see that  $\dot\varepsilon\cdot \dot\varepsilon =0$, that is, $\varepsilon$ is a null curve. Differentiating again, we obtain
$$\ddot\varepsilon(s)=(-\sin\theta(s),\cos\theta(s),0)+ \dot f(s)(\cos\theta(s),\sin\theta(s),1).$$ Hence
$\ddot{\varepsilon}\cdot\ddot{\varepsilon}=1$, which means that $s$ is a pseudo-arc parameter of $\varepsilon$.

By hypothesis,  $\varepsilon_3(s)=\int_{s_0}^sf(v)dv+b_0$ is nonvanishing on $(s_0,\beta)$. Hence, by Proposition \ref{null-evolute}, $\varepsilon$ is the $L$-evolute of some plane curve. The radius of curvature $u$ of this plane curve at $s$ is precisely $\varepsilon_3(s)$. On the other hand, a simple computation shows that $t$ is an arc length parameter for the curve $\gamma$ defined by \eqref{gamma} and that the radius of curvature of $\gamma$ at $s$ is $\varepsilon_3(s)$. Hence, the fundamental theorem of plane curves assures that, up to rigid motion in $\mathbf{E}^2$, the plane curve whose $L$-evolute is  $\varepsilon$ coincides with $\gamma$.

Now, take any curve $\bar \gamma$ with potential function $f:(s_0,\beta)\to \mathbb{R}$. Let $\bar t$  and $\bar u$ be an arc length parameter and the radius of curvature, respectively, of $\bar \gamma$, so that, by definition of potential function,
\begin{equation*}\label{bar1}
f(s)=\dot{\bar{\phi}}(s)\dot{\bar {u}}(\bar\phi(s))=\sqrt{{\bar {u}}(\bar\phi(s))\dot{\bar {u}}(\bar\phi(s))},\end{equation*} with $\bar t=\bar\phi(s)$.
According to our choices in the definition of potential function, we have
\begin{equation*}\label{bar2}\frac{d\bar t}{ds}=\epsilon \sqrt{\bar u/\dot{\bar{u}}},\end{equation*} where $\epsilon:=\mathrm{sign}(\dot{\bar\phi})=\mathrm{sign}(\dot{\bar u})=\mathrm{sign}(\bar u)$.
From the first equation we see that $f={d\bar u}/{ds}$ and  multiplying both it follows that
$\frac{d\bar t}{ds}={\bar u}/{f}.$
 Hence
\begin{equation}\label{baru}
\bar{u}(\bar{\phi}(s))=\int_{s_0}^sf(v)dv+\bar{u}(\bar{\phi}(s_0)),\quad \frac{d\bar t}{ds}=\frac{\int_{s_0}^sf(v)dv+\bar{u}(\bar{\phi}(s_0))}{f(s)}.\end{equation}
Taking $b_0:={\bar {u}}(\bar\phi(s_0))$ we see from \eqref{baru} that $\gamma$ and $\bar \gamma$ have the same curvature function and the same arc length parameter $t=\bar t$, which means that $\gamma$ and $\bar{\gamma}$ are related by a rigid motion. In particular, the $L$-evolute $\bar \varepsilon$ of $\bar \gamma$ is also given by \eqref{evolutegamma1} up to  rigid motion acting on the first two coordinates. We can see this constructively as follows.

Let $\bar \varepsilon=(\bar \varepsilon_1,\bar \varepsilon_2,\bar \varepsilon_3)$ be the $L$-evolute of $\bar \gamma$. By definition of $L$-evolute, $\bar \varepsilon_3(s)=\bar{u}(\bar{\phi}(s))$. On the other hand, we know that the evolute $\gamma_{\bar\varepsilon}=(\bar\varepsilon_1,\bar\varepsilon_2)$ of $\bar\gamma$
has arc length $\bar u$ and curvature $k_\varepsilon=1/\bar{u}\dot{\bar{u}}$. Since
$$\int k_{\bar\varepsilon}d\bar u=\int \frac{1}{\bar u\dot{\bar u}}\frac{d\bar u}{ds}ds=\int\frac{1}{f}ds,$$
setting $\theta(s)=\int_{s_0}^s\frac{1}{f(v)}dv$, the curve  $\gamma_{\bar\varepsilon}$ is given, up to rigid motion, by
$$\gamma_{\bar\varepsilon}(\bar u(\bar\phi(s)))= \int_{s_0}^s\big(\cos\theta(v),\sin\theta(v)\big)f(v)dv, $$ by the fundamental theorem of plane curves. Consequently, the $L$-evolute $\bar\varepsilon$
is given, up to rigid motion acting on the first two coordinates, by
 \begin{equation*}\label{evolutegamma}
     \bar \varepsilon(s)=\int_{s_0}^s\big(\cos\theta(v),\sin\theta(v), 1 \Big)f(v)dv+(0,0,{\bar {u}}(\bar\phi(s_0))).\end{equation*}

Finally, if $\gamma$ and $\bar \gamma$ are $\mathcal{L}_I$ congruent, then their $L$-evolutes $\bar \varepsilon$ and $\varepsilon$ satisfy $\bar\varepsilon (s)=A\varepsilon(s)+\vec b$, with common pseudo-arc parameter $s$, where $A$ is a rigid motion acting on the first two coordinates and $\vec b=(0,0,b_0)\in \mathbf{E}_1^3$. Hence, the corresponding curvature radius satisfy $\bar{u}(\bar\phi(s))= {u}(\phi(s))+b$. Consequently,
$\bar f(s)=d\bar u/ds=d u/ds= f(s)$, and we are done.

%
%
\end{proof}

\begin{rem}
  These results provide a scheme to integrate equations \eqref{taudet}. Given a function $\tau(s)$, if $f(s)$ is a solution to the differential equation \eqref{pseudotorsionf}, then the null curve \eqref{evolutegamma1} has pseudo-torsion $\tau$ and pseudo-arc parameter $s$. This means that the components of the tangent vector
  \begin{equation}\label{tangent}
  \mathbf{T}=\Big(\cos\big(\int\frac{1}{f}ds\big)f,\sin\big(\int\frac{1}{f}ds\big)f,  f \Big)
  \end{equation}
  of $\varepsilon$ satisfy \eqref{taudet}. Moreover, all solutions of \eqref{pseudotorsionf} for a given  $\tau(s)$ arise in this way.

\end{rem}
\begin{rem}\label{lagcongruent}
We have  also obtained a description of all plane curves which are Laguerre congruent to a given curve $\gamma$. As a matter of fact, starting with  $\gamma$, compute its potential function and the pseudo-torsion $\tau$ of its $L$-evolute making use of \eqref{potential}  and \eqref{pseudotorsionf}; in view of Theorem \ref{homoex1},  find the general solution of the equation
$$\frac{1}{|\lambda|}\tau\big( {s}/{\sqrt{|\lambda|}}\big)=\frac{1}{f(s)}\ddot f(s)-\frac{1}{2f^2(s)}\big(\dot f^2(s)+1 \big)$$
for each $\lambda\neq 0$;
since this is a second order differential equation, the two initial conditions together with the parameter $\lambda$ determine a three-parameter family of potential functions;  for any such function $f$, formulas \eqref{gamma} and \eqref{parameters}
 define a curve in the plane which is Laguerre equivalent to $\gamma$. Conversely, any curve which is Laguerre congruent to $\gamma$ arises in this way, up to rigid motion.
\end{rem}

\begin{eg}\label{exspiral} Equation \eqref{pseudotorsionf} is equivalent to $2\tau f^2=2f\ddot f-(\dot f^2+1)$. Differentiating this, we obtain the  third order linear ordinary differential equation
\begin{equation}\label{-1}
  \dddot{f}-2\tau \dot f-\dot\tau f=0.
\end{equation}
For $\tau=-\frac{5}{2s^2}$, the general solution of \eqref{-1} is
\begin{equation}\label{potentialspiral}
  f(s)=a s+bs \sin(2\ln s)+cs\cos(2\ln s),
\end{equation}
and a straightforward computation shows that the solutions of  \eqref{pseudotorsionf}, with $\tau=-\frac{5}{2s^2}$, are precisely those functions \eqref{potentialspiral}
 satisfying $b^2+c^2-a^2=-\frac{1}{4}$. In particular, for $c=b=0$ and $a=\frac12$ we get the solution $f(s)=\frac{s}{2}$. In view of Theorem \ref{fund}, the arc length $t$ of the plane curve $\gamma$ associated to this potential function is given by $t=\frac{s^2}{4}$ and we have
\begin{equation}\label{spirallog}
\gamma(t)=\frac{t}{2}\big(\sin(\ln 4t)+\cos(\ln4t),\sin(\ln4t)-\cos(\ln4t)\big).
\end{equation}
Up to Euclidean motion, $\gamma$ is the logarithmic spiral $\theta\mapsto e^\theta(\cos\theta,\sin\theta)$ reparameterized by arc length $t$. The $L$-evolute of $\gamma$ is given by
$$
\varepsilon(t(s))=\frac{s^2}{8}\big(\sin(2\ln s)+\cos(2\ln s),\sin(2\ln s)-\cos(2\ln s),2\big).
$$
This null curve is an example of a \emph{Cartan slant helix} in $\mathbf{E}_1^3$. A Cartan slant helix in $\mathbf{E}_1^3$ is a null curve parameterized by pseudo-arc whose  normal vector makes a constant angle with a fixed direction. Accordingly to the classification established  in \cite{choikim},  Cartan slant helices are precisely those null curves whose pseudo-torsions are of the form $\pm\frac{1}{(cs+b)^2}$, where $c\neq  0$ and $b$ are constants. \end{eg}
\begin{eg}Let us consider  the \emph{Cornu's Spiral}
  $$\gamma(t)=\Big(\int_0^t\cos({v^2}/{2})dv,\int_0^t\sin({v^2}/{2})dv\Big).$$
  This is a plane curve with arc length $t$ and radius of curvature $u=1/t$. From \eqref{ts}, \eqref{potential} and \eqref{pseudotorsionf} we can see that the $L$-evolute  of $\gamma$ has pseudo-arc $s=2\sqrt{t}$, for $t>0$, the potential function is $f_\gamma(s)={8}/s^3$ and the pseudo-torsion is  $\tau=15/2s^2-s^6/128$. For this pseudo-torsion $\tau$, the general solution of \eqref{-1} is
  $$f(s)=\frac{a+b\,\sin\big(s^4/32\big)+c\,\cos\big(s^4/32\big)}{s^3};$$
  and the solutions of \eqref{pseudotorsionf} are precisely those functions $f(s)$ satisfying $a^2-b^2-c^2=64.$
\end{eg}
\begin{eg}\label{odasfiguras}The potential functions associated to the pseudo-torsion $\tau=-\frac{3}{8s^2}-\frac{1}{2s}$ are of the form
$$f(s)=a\sqrt{s}+b\sin(2\sqrt{s})\sqrt{s}+c\cos(2\sqrt{s})\sqrt{s},$$
with $a^2-b^2-c^2=1$. For $a=1$ and $b=c=0$, formula \eqref{gamma}, with $s_0=0$ and $b_0=0$, gives $\gamma(t(s))=\frac23\big(x(t(s)),y(t(s))\big)$, where
\begin{align*}
 x(t(s))&= \frac12\sqrt{s}(2s-3)\sin(2\sqrt{s})+\frac34(2s-1)\cos(2\sqrt{s})+\frac34\\
 y(t(s))&=\frac34(2s-1)\sin(2\sqrt{s})-\frac12 \sqrt{s}(2s-3)\cos(2\sqrt{s}).
\end{align*}
In the  Figure \ref{Rotulo}, the curve $\gamma$ is represented on the left, for $0<s<500$; on the right, one can see the plane curve $\bar\gamma$ associated to the potential function
 $f_{\bar\gamma}(s)=\sqrt{2s}+\sin(2\sqrt{s})\sqrt{s}$ (which corresponds to the choice $a=\sqrt2$, $b=1$ and $c=0$), obtained by numerical integration of \eqref{gamma},  with $s_0=0$ and $b_0=0$, also for $0<s<500$.
  \begin{figure}[!htb]
\centering
\includegraphics[width=4cm,height=4cm]{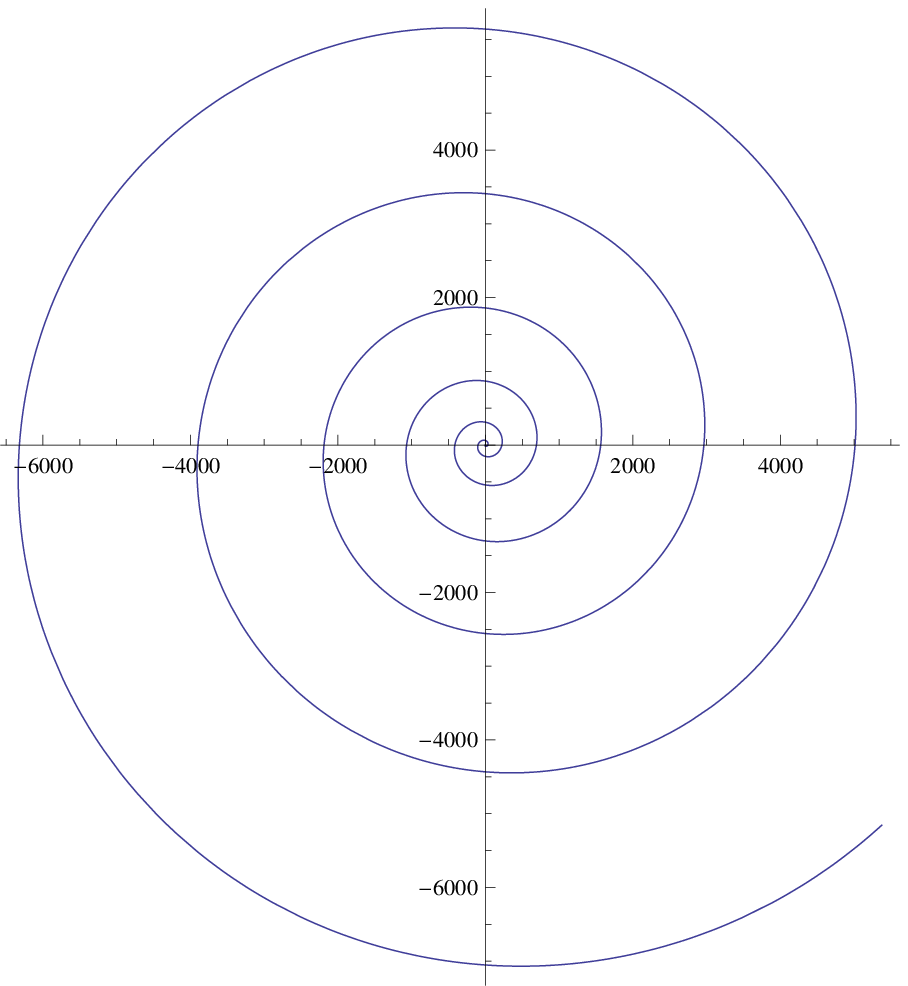}\qquad \includegraphics[width=4cm,height=4cm]{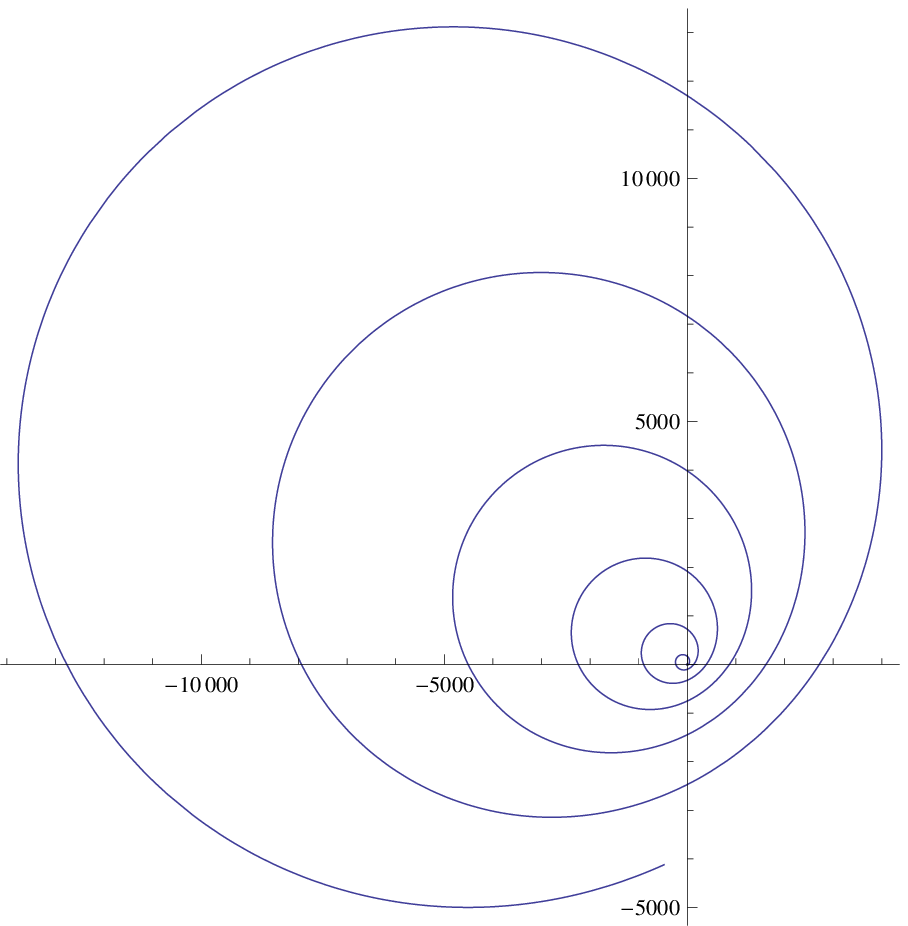}
\caption{The Laguerre congruent plane curves $\gamma$ and $\bar{\gamma}$ associated to the potential functions $f_\gamma(s)=\sqrt{s}$ and $f_{\bar\gamma}(s)=\sqrt{2s}+\sin(2\sqrt{s})\sqrt{s}$.}
\label{Rotulo}
\end{figure}
\end{eg}
\subsection{Pseudo-torsion and Schwarzian derivatives}

Very recently, Z. Olszak \cite{ols} observed that the pseudo-torsion of a null curve can be described as follows.
\begin{thm}\label{olss}\cite{ols} If $\varepsilon:I\to \mathbf{E}_1^3$ is a null curve with pseudo-arc parameter $s$, then \begin{equation}\label{ols}
\varepsilon(s)=\varepsilon(s_0)\pm\frac12\int_{s_0}^s \frac{1}{\dot g(v)}\big(2g(v),g(v)^2-1,g(v)^2+1\big)dv,
\end{equation}
with $s,s_0\in I$, for some non-zero function $g$ with nonvanishing derivative $\dot g$ on $I$. The pseudo-torsion $\tau$ of $\varepsilon$ is precisely the Schwarzian derivative of $g$:
$$\tau=S(g)=\frac{\dddot g}{\dot g}-\frac32 \Big(\frac{\ddot g}{\dot g} \Big)^2.$$
\end{thm}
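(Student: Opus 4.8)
The plan is to treat the two assertions in turn: the integral representation \eqref{ols}, and then the identification of the pseudo-torsion with $S(g)$.

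For the representation I would exploit the rational parametrisation of the null cone. The projectivised cone $\{x^2+y^2=z^2\}$ is a circle, and off a single ruling it is swept out by $g\mapsto \mathbf{w}(g):=(2g,\,g^2-1,\,g^2+1)$, each $\mathbf{w}(g)$ being lightlike since $(2g)^2+(g^2-1)^2-(g^2+1)^2=0$. Hence, after an ambient Lorentz rotation if necessary to avoid the excluded direction $(0,1,1)$, the lightlike tangent $\dot\varepsilon$ can be written as $\dot\varepsilon(s)=h(s)\,\mathbf{w}(g(s))$ with $h$ nowhere zero. The standing hypothesis that $\ddot\varepsilon$ is spacelike and independent of $\dot\varepsilon$ forces $\dot g\neq 0$: indeed $\ddot\varepsilon=\dot h\,\mathbf{w}+h\dot g\,\mathbf{w}'$ with $\mathbf{w}'=2(1,g,g)$, so $\dot g\equiv 0$ would make $\ddot\varepsilon$ parallel to $\mathbf{w}\parallel\dot\varepsilon$. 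The decisive computation is that $\mathbf{w}\cdot\mathbf{w}=\mathbf{w}\cdot\mathbf{w}'=0$ while $\mathbf{w}'\cdot\mathbf{w}'=4$; imposing the pseudo-arc condition $\ddot\varepsilon\cdot\ddot\varepsilon=1$ then reduces to $4h^2\dot g^2=1$, so $h=\pm\frac{1}{2\dot g}$, and integrating $\dot\varepsilon=\pm\frac{1}{2\dot g}\mathbf{w}(g)$ gives precisely \eqref{ols}.

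For the pseudo-torsion I would use the classical bridge between the Schwarzian and second-order linear ODEs. Writing $w_2=|\dot g|^{-1/2}$ and $w_1=g\,w_2$, one has $g=w_1/w_2$ with a constant Wronskian, and substitution shows that $\mathbf{T}=\dot\varepsilon$ equals $\pm\frac12(2w_1w_2,\,w_1^2-w_2^2,\,w_1^2+w_2^2)$; thus the three components of $\mathbf{T}$ are the symmetric square of the pair $w_1,w_2$. Constancy of the Wronskian means $w_1,w_2$ solve a common equation $\ddot w+q\,w=0$ with no first-order term, so their products --- and in particular the components of $\mathbf{T}$ --- satisfy the symmetric-square equation $\dddot f+4q\dot f+2\dot q f=0$. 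But the Frenet equations give \eqref{Tc}, i.e. the same components also solve $\dddot f-2\tau\dot f-\dot\tau f=0$; since $\{\mathbf{T},\mathbf{N},\mathbf{B}\}$ is a basis of $\mathbf{E}_1^3$ these three functions form a fundamental system, and since a monic third-order linear ODE is determined by its solution space, the two equations coincide. Matching coefficients relates $q$ and $\tau$, and combined with the standard identity $S(w_1/w_2)=2q$ this exhibits the pseudo-torsion as the Schwarzian $S(g)$, the overall sign being fixed by the Lorentzian signature and the orientation $\pm$ in \eqref{ols} in accordance with \cite{ols}.

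The main obstacle is precisely this sign- and coefficient-bookkeeping in the second step: one must verify carefully that the components of $\mathbf{T}$ are genuinely the symmetric square of a fundamental system and track every sign through the indefinite inner product. As an independent safeguard I would also run the direct computation: in the adapted frame $\{\mathbf{w},\,(1,g,g),\,(0,1,1)\}$, whose only nontrivial pairings are $(1,g,g)\cdot(1,g,g)=1$ and $\mathbf{w}\cdot(0,1,1)=-2$, one expresses $\mathbf{N}=\dot{\mathbf{T}}$, sets $\mathbf{B}=\dot{\mathbf{N}}-\tau\mathbf{T}$, and imposes $\mathbf{B}\cdot\mathbf{B}=0$; this single scalar equation solves for $\tau$ and, after simplification, returns the Schwarzian of $g$, confirming the ODE argument.
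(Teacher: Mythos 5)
The paper offers no proof of this statement --- it is imported verbatim from \cite{ols}, accompanied only by the remark that \eqref{ols} can be matched to \eqref{evolutegamma1} via the substitution \eqref{og} --- so your argument is necessarily an independent one. Your first step is correct and complete: the rational parametrisation $\mathbf{w}(g)=(2g,g^2-1,g^2+1)$ of the null cone, the pairings $\mathbf{w}\cdot\mathbf{w}=\mathbf{w}\cdot\mathbf{w}'=0$, $\mathbf{w}'\cdot\mathbf{w}'=4$, and the normalisation $4h^2\dot g^2=1$ do yield \eqref{ols}, modulo the unavoidable caveat about the excluded ruling.

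The gap is the sign at the end of your second step, and it is not the harmless bookkeeping you suggest. Carrying your own coefficient matching to completion: $w_1,w_2$ solve $\ddot w+qw=0$ with $q=-\ddot w_2/w_2=\tfrac12 S(g)$, the symmetric-square equation is $\dddot y+4q\dot y+2\dot q\,y=0$, and comparison with \eqref{Tc} forces $4q=-2\tau$, i.e.\ $\tau=-2q=-S(g)$ --- the opposite sign to the statement. The $\pm$ in \eqref{ols} cannot repair this: replacing $\mathbf{T}$ by $-\mathbf{T}$ replaces $\mathbf{N}$ and $\mathbf{B}$ by their negatives and leaves $\tau$ in \eqref{freneteqspace} unchanged. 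A concrete check: $g(s)=e^{s}$ gives $S(g)=-\tfrac12$ and $\mathbf{T}=(1,\sinh s,\cosh s)$, for which $\mathbf{N}=(0,\cosh s,\sinh s)$, $\mathbf{B}=\bigl(-\tfrac12,\tfrac12\sinh s,\tfrac12\cosh s\bigr)$ and $\dot{\mathbf{B}}=\tfrac12\mathbf{N}$, so $\tau=+\tfrac12=-S(g)$. (The same discrepancy is already latent in the paper: combining \eqref{og} with \eqref{pseudotorsionf} gives $S(g)=\tfrac{1}{2f^2}+S(\theta)=\tfrac{1+\dot f^2}{2f^2}-\tfrac{\ddot f}{f}=-\tau$.) So under the Frenet conventions \eqref{freneteqspace} the identity your method proves is $\tau=-S(g)$; the form displayed in the theorem evidently carries a different sign convention from \cite{ols}. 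You must either work in Olszak's conventions and translate, or prove (and state) $\tau=-S(g)$; appealing to ``the Lorentzian signature and the orientation $\pm$'' does not close this. Your proposed safeguard computation in the adapted frame is a good idea precisely because, if actually carried out, it would expose this sign.
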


Observe that \eqref{ols} can be obtained, up to Euclidean isometry in the first two coordinates, from \eqref{evolutegamma1} by using the Weierstrass substitution  \begin{equation}\label{og} g(s)=\pm \tan(\theta(s)/2)\end{equation} where $\theta(s)$ is the turning angle of the corresponding plane curve $\gamma$ at $s$.

\subsection{Potential function of the evolute}

\begin{thm}
Let $\gamma$ be a plane curve parameterized by arc length $t$ and $\gamma_\varepsilon$ be its evolute. The potential function $f$ and the pseudo-arc parameter $s$ associated to $\gamma$ are related with the potential function $f_\varepsilon$ and the pseudo-arc parameter $s_\varepsilon$ associated to the evolute $\gamma_\varepsilon$  by
\begin{equation}\label{evoluteandpotential}
 f_\varepsilon^2(s_\varepsilon(s))=2f^2(s)\Big|\frac{df}{ds}(s)\Big|
\end{equation}
and $s_\varepsilon=\beta(s)$ with $\beta(s)=\int_{s_0}^s\sqrt{2|\frac{df}{ds}(v)|}dv+s_\varepsilon(s_0).$
Consequently, the  $L$-evolute of $\gamma_\varepsilon$ is given by
 \begin{equation} \label{evoevo}
  \varepsilon_{\gamma_\varepsilon}(\beta(s))=2\int_{s_0}^sf(v)\Big|\frac{df}{ds}(v)\Big|\big(\cos\theta(v),\sin\theta(v), 1  \big)dv,\end{equation} up to congruence in $\mathcal{L}_I$, where $\theta(s)=\int_{s_0}^s\frac{1}{f(v)}dv.$
\end{thm}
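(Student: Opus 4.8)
The plan is to reduce everything to two elementary identities relating the potential function $f$ of $\gamma$ to the geometry of the evolute $\gamma_\varepsilon$. First I would record that, by the definition of the potential function together with Theorem \ref{fund}, the third coordinate of the $L$-evolute of $\gamma$ equals the radius of curvature $u$ of $\gamma$ and satisfies $du/ds=f$. Second, I would apply \eqref{curvatureevolute} with $t$ an arc length parameter of $\gamma$ (so $|\dot\gamma|=1$) to get $k_\varepsilon=1/(u\dot u)$; hence the radius of curvature of $\gamma_\varepsilon$ is $u_\varepsilon=u\dot u$, where the dot is $d/dt$. Comparing with \eqref{potential} yields the crucial identification $u_\varepsilon=f^2$ at corresponding points. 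Since $u$ itself is an arc length parameter of $\gamma_\varepsilon$, the pair $(u_\varepsilon,u)$ plays exactly the role that $(u,t)$ plays for $\gamma$, and the regularity needed for the $L$-evolute of $\gamma_\varepsilon$ to exist, namely that the derivative of $u_\varepsilon$ in its own arc length not vanish, is precisely the hypothesis $\dot f\neq 0$.

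With these identifications the first two formulas become a short chain rule computation. From $u_\varepsilon=f^2$ and $du/ds=f$ I would compute $du_\varepsilon/du=(du_\varepsilon/ds)/(du/ds)=2f\dot f/f=2\dot f$, so that the potential function $f_\varepsilon=\sqrt{u_\varepsilon\,(du_\varepsilon/du)}$ of $\gamma_\varepsilon$ satisfies $f_\varepsilon^2=2f^2\dot f$; the orientation of $\gamma_\varepsilon$ must be chosen so that $u_\varepsilon\,(du_\varepsilon/du)>0$, which accounts for the absolute value and gives \eqref{evoluteandpotential}. For the pseudo-arc parameter I would apply \eqref{ts} to $\gamma_\varepsilon$, reading it with $(u_\varepsilon,u)$ in place of $(u,t)$, to obtain $du/ds_\varepsilon=\sqrt{|u_\varepsilon/(du_\varepsilon/du)|}=f/\sqrt{2|\dot f|}$; dividing by $du/ds=f$ gives $ds_\varepsilon/ds=\sqrt{2|\dot f|}$, which integrates to the stated formula for $\beta$.

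Finally, for \eqref{evoevo} I would feed $f_\varepsilon$ into the reconstruction formula \eqref{evolutegamma1} applied to $\gamma_\varepsilon$ and then change the variable of integration from $s_\varepsilon$ back to $s$ via $s_\varepsilon=\beta(s)$. The two cancellations that make this clean are $(1/f_\varepsilon)\,ds_\varepsilon=(1/f)\,ds$, so that the turning angle of $\gamma_\varepsilon$ agrees with $\theta(s)=\int_{s_0}^s(1/f)\,dv$, and $f_\varepsilon\,ds_\varepsilon=f\sqrt{2|\dot f|}\cdot\sqrt{2|\dot f|}\,ds=2f|\dot f|\,ds$; together these turn \eqref{evolutegamma1} into \eqref{evoevo}, the additive constant being absorbed into the $\mathcal{L}_I$-congruence. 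I expect the only real obstacle to be bookkeeping: keeping straight which parameter plays the role of arc length and which plays the role of radius of curvature at each stage, and tracking the sign choices (built into the definition of the potential function) that force the absolute values on $\dot f$; the analytic content is merely the chain rule combined with the identities $du/ds=f$ and $u_\varepsilon=f^2$.
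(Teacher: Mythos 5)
Your proposal is correct and follows essentially the same route as the paper's proof: both rest on the identifications $u_\varepsilon=u\dot u=f^2$ and $f=du/ds$, derive \eqref{evoluteandpotential} and the formula for $\beta$ by the chain rule through the intermediate parameter $u$, and obtain \eqref{evoevo} by substituting $f_\varepsilon$ into \eqref{evolutegamma1} and changing variables via $s_\varepsilon=\beta(s)$. The only difference is that you spell out the two cancellations $(1/f_\varepsilon)\,ds_\varepsilon=(1/f)\,ds$ and $f_\varepsilon\,ds_\varepsilon=2f|\dot f|\,ds$, which the paper leaves as ``straightforward.''
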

\begin{proof}
Recall that the arc length parameter of $\gamma_\varepsilon$ is precisely the radius of curvature $u$ of $\gamma$. By \eqref{curvatureevolute} and taking into account the definition of potential function, the radius of curvature of $\gamma_\varepsilon$ is given by $u_\varepsilon=u\dot u=f^2$. Hence, since $f=\frac{du}{ds}$,
$$f_\varepsilon^2=\Big|u_\varepsilon\frac{d u_\varepsilon}{du}\Big|=\Big|f^2\frac{d u_\varepsilon}{ds}\frac{d s}{du}\Big|=2f^2 \Big|\frac{d f}{ds}\Big|.$$

On the other hand, taking into account \eqref{ts} and the previous observations, we also have
$$\frac{ds_\varepsilon}{ds}=\frac{ds_\varepsilon}{du}\frac{du}{ds}=\sqrt{\Big|\frac{1}{u_\varepsilon}\frac{du_\varepsilon}{du}\Big|}\,f=\frac{f_\varepsilon}{f}=\sqrt{2\Big|\frac{df}{ds}\Big|}, $$
hence  $s_\varepsilon=\beta(s)$ with $\beta(s)=\int_{s_0}^s\sqrt{2|\frac{df}{ds}(v)|}dv+s_\varepsilon(s_0).$

Formula \eqref{evoevo} follows straightforwardly by applying  \eqref{evolutegamma1} to $f_\varepsilon$ and by taking the change of parameter $s_\varepsilon=\beta(s)$.
\end{proof}

\begin{cor}The pseudo-torsion $\tau_\varepsilon$ of $\varepsilon_{\gamma_\varepsilon}$, the $L$-evolute of $\gamma_\varepsilon$, is given by
  \begin{equation}\label{tautau}\tau_\varepsilon(s_\varepsilon=\beta(s))=\frac{\tau(s)-S(\beta(s))}{2\big|\frac{df}{ds}\big|},\end{equation}
  where $S(\beta(s))$ is the Schwarzian derivative of $\beta(s).$
\end{cor}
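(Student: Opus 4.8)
The plan is to reduce the statement to the cocycle identity for the Schwarzian derivative, using Olszak's description of the pseudo-torsion (Theorem \ref{olss}). By that theorem, if $g$ is the function attached to $\varepsilon$ through the Weierstrass substitution \eqref{og}, then $\tau = S(g)$, the Schwarzian being taken with respect to the pseudo-arc parameter $s$; likewise, if $g_\varepsilon$ is the function attached to $\varepsilon_{\gamma_\varepsilon}$, then $\tau_\varepsilon = S(g_\varepsilon)$, the Schwarzian being taken with respect to the pseudo-arc parameter $s_\varepsilon = \beta(s)$. The whole computation then takes place at the level of these two scalar functions and the reparameterization $\beta$. First I would relate $g$ and $g_\varepsilon$: since $\dot\gamma_\varepsilon = \dot u\,\mathbf{n}$, the unit tangent of the evolute $\gamma_\varepsilon$ is $\pm\mathbf{n}$, with constant sign $\mathrm{sign}(\dot u)$, so it makes a constant angle with the unit tangent of $\gamma$. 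Hence a turning angle $\theta_\varepsilon$ of $\gamma_\varepsilon$ and a turning angle $\theta$ of $\gamma$ satisfy, at corresponding points, $\theta_\varepsilon(\beta(s)) = \theta(s) + c$ for some constant $c$ (this can also be read off by comparing \eqref{evoevo} with the expression obtained by writing \eqref{evolutegamma1} for $\gamma_\varepsilon$ and substituting $s_\varepsilon = \beta(s)$, using $f_\varepsilon^2 = 2f^2|df/ds|$). Through \eqref{og}, the tangent addition formula turns this constant angular shift into a Möbius transformation:
$$g_\varepsilon(\beta(s)) = \pm\tan\!\big(\tfrac{\theta(s)+c}{2}\big) = \frac{g(s)\cos(c/2)+\sin(c/2)}{-g(s)\sin(c/2)+\cos(c/2)} = M(g(s)),$$
where $M$ is a Möbius transformation with constant coefficients (the residual sign $\pm$ being itself Möbius); that is, $g_\varepsilon\circ\beta = M\circ g$.

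The key step is to apply the Schwarzian cocycle identity $S(h\circ\psi) = (S(h)\circ\psi)\,\dot\psi^2 + S(\psi)$ to $g_\varepsilon \circ \beta$. Since $M$ is Möbius we have $S(M\circ g) = S(g) = \tau$, so $S(g_\varepsilon\circ\beta) = \tau$; on the other hand,
$$S(g_\varepsilon\circ\beta)(s) = S(g_\varepsilon)(\beta(s))\,\dot\beta(s)^2 + S(\beta)(s) = \tau_\varepsilon(\beta(s))\,\dot\beta(s)^2 + S(\beta)(s).$$
Equating these two expressions and solving gives $\tau_\varepsilon(\beta(s)) = \big(\tau(s) - S(\beta)(s)\big)/\dot\beta(s)^2$. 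The previous theorem supplies $\dot\beta(s)^2 = 2\,|df/ds|$, which yields \eqref{tautau}.

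The main obstacle is the bookkeeping in the first step: one must keep track of the fact that $g$ and $g_\varepsilon$ are functions of the two \emph{different} pseudo-arc parameters $s$ and $s_\varepsilon = \beta(s)$, and that the constant angular shift between the turning angles of $\gamma$ and $\gamma_\varepsilon$ translates, after the Weierstrass substitution, precisely into post-composition of $g$ by a Möbius transformation. Once this identification is in place, the invariance of the Schwarzian derivative under Möbius transformations, together with its cocycle behaviour under reparameterization, makes the rest automatic; in particular, the term $S(\beta)$ appearing in \eqref{tautau} is exactly the reparameterization contribution in the cocycle identity, and the factor $2|df/ds|$ is $\dot\beta^2$.
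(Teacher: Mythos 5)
Your proposal is correct and follows essentially the same route as the paper: both arguments observe that the turning angles of $\gamma$ and $\gamma_\varepsilon$ differ by a constant, so that via the Weierstrass substitution \eqref{og} the Olszak functions satisfy $g_\varepsilon\circ\beta = h\circ g$ for a fractional linear $h$, and then invoke M\"obius invariance plus the cocycle identity for the Schwarzian together with $\dot\beta^2 = 2|df/ds|$. The only cosmetic difference is that the paper pins the constant angular shift down as $\pi/2$ while you keep it as a general constant $c$; the conclusion is unaffected.
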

\begin{proof}
The turning angles of $\gamma$ and its evolute $\gamma_\varepsilon$ differ by  $\pi/2$, which implies that the corresponding functions  $g$ and $g_\varepsilon$ given by \eqref{og} satisfy $$g_\varepsilon \circ \beta(s)=\pm  \tan(\theta(s)/2+\pi/4)=\pm \frac{ \tan(\theta(s)/2)+1 }{1-\tan(\theta(s)/2)}=  h\circ g (s)$$ for some fractional linear transformation $h$. In particular, $S(g)=S(g_\varepsilon \circ \beta)$. Hence, Theorem \ref{olss} together with the chain rule for the Schwarzian derivative yield
$$\tau(s)=S(g)=S(g_\varepsilon \circ \beta)=(S(g_\varepsilon)\circ\beta)\dot\beta^2+S(\beta)=(\tau_\varepsilon\circ \beta)\dot\beta^2+S(\beta).$$
Since $\dot\beta^2(s)={2|\frac{df}{ds}|}$, we are done.
\end{proof}
\begin{eg}
  If the pseudo-arc of $\gamma_\varepsilon$ coincides with that of $\gamma$, that is $s_\varepsilon(s)=s$, then we see from \eqref{evoluteandpotential} that $f_\varepsilon=f$ and $\frac{df}{ds}=\pm\frac12$.  Hence $f(s)=\pm\frac12s+c$ for some constant $c$ and \eqref{tautau} yields $\tau_\varepsilon(s)=\tau(s)$.
  For $f(s)=\frac{s}{2}$, the plane curve $\gamma$ is the logarithmic spiral \eqref{spirallog}. More generally, each potential function $f(s)=\pm\frac12s+c$
  corresponds to a plane curve whose $L$-evolute is a Cartan slant helice with pseudo-torsion $\tau=-\frac{5}{2(\pm s+2c)^2}$.
  \end{eg}
\begin{eg}
Let $\gamma$ be the plane curve associated to the potential function $f(s)=\sqrt{s}$ (see Example \ref{odasfiguras}). Then its evolute has potential function $f_\varepsilon(\beta(s))=s^{1/4}$ and $s_\varepsilon=\beta(s)$ satisfies $\dot\beta(s)=s^{-1/4}$. Integrating this we obtain $f_\varepsilon(s_\varepsilon)=\big(\frac34 s_\varepsilon \big)^{1/3}$. The pseudo-torsion of the extended evolute $\varepsilon_{\gamma_\varepsilon}$ is then given by $\tau_\varepsilon(s_\varepsilon)=-\frac{5}{32}(3s_\varepsilon/4)^{-2}-\frac12(3s_\varepsilon/4)^{-2/3}$ and we have
$$\varepsilon_{\gamma_\varepsilon}(s_\varepsilon(s))=\big(\sqrt{s}\sin(2\sqrt{s})+\frac12\cos(2\sqrt{s}),\frac12\sin(2\sqrt{s})-\sqrt{s}\cos(2\sqrt{s}),s       \big) .$$
\end{eg}

\subsection{Null helices and the corresponding potential functions}
A null curve $\varepsilon$ parameterized by the pseudo-arc parameter is called a \emph{null helix} if its  pseudo-torsion $\tau$ is constant \cite{F-G-L,ino-lee,rafael}.
Null helices admit the following classification.
\begin{prop}\cite{F-G-L}
 A null helix with pseudo-torsion $\tau$  and parameterized by the pseudo-arc parameter $s$ is congruent to one of the following:
  \begin{enumerate}
  \item if $\tau<0$, $\varepsilon_1(s)=\frac{1}{2|\tau|}\big(\cos (\sqrt{2|\tau|}\,s),\sin (\sqrt{2|\tau|}\,s), \sqrt{2|\tau|}\,s\big);$
  \item if $\tau=0$, $\varepsilon_2(s)=\big(\frac{s^3}{4}-\frac{s}{3},\frac{s^2}{2}, \frac{s^3}{4}+\frac{s}{3}\big)$;
   \item if $\tau>0$, $\varepsilon_3(s)=\frac{1}{2\tau}\big( \sqrt{2\tau}\,s,\cosh (\sqrt{2\tau}\,s),\sinh (\sqrt{2\tau}\,s)\big).$
\end{enumerate}
\end{prop}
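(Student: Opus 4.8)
The plan is to exploit the defining feature of a null helix: the pseudo-torsion $\tau$ is constant, so that the Frenet system \eqref{freneteqspace} becomes a linear system of ordinary differential equations with \emph{constant} coefficients. Concretely, since $\dot\tau = 0$, equation \eqref{Tc} reduces to $\dddot{\mathbf{T}} = 2\tau\dot{\mathbf{T}}$, so each component $f_i$ of the tangent vector $\mathbf{T} = (f_1, f_2, f_3)$ satisfies the scalar equation $\dddot{f}_i = 2\tau\dot{f}_i$. Its characteristic polynomial is $r^3 - 2\tau r = r(r^2 - 2\tau)$, whose roots are $0$ and $\pm\sqrt{2\tau}$. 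The qualitative nature of these roots is governed entirely by the sign of $\tau$, and this is precisely what produces the trichotomy in the statement.

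First I would treat the three cases according to the sign of $\tau$. If $\tau < 0$, the nonzero roots are the conjugate imaginary numbers $\pm i\sqrt{2|\tau|}$, so each $f_i$ is a linear combination of $1$, $\cos(\sqrt{2|\tau|}\,s)$ and $\sin(\sqrt{2|\tau|}\,s)$; integrating $\varepsilon = \int \mathbf{T}\,ds$ then yields the trigonometric form $\varepsilon_1$. If $\tau > 0$, the nonzero roots are the real numbers $\pm\sqrt{2\tau}$, the solutions involve $\cosh$ and $\sinh$, and integration gives $\varepsilon_3$. If $\tau = 0$, the characteristic polynomial has a triple root at $0$, so the $f_i$ are quadratic polynomials in $s$ and $\varepsilon$ is a cubic polynomial curve, which is the form $\varepsilon_2$.

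Next I would pin down the integration constants. The general solution carries too many free parameters; these are cut down by imposing the two algebraic constraints from \eqref{taudet}, namely $\mathbf{T}\cdot\mathbf{T} = 0$ (the null condition $f_1^2 + f_2^2 - f_3^2 = 0$) and $\dot{\mathbf{T}}\cdot\dot{\mathbf{T}} = 1$ (the pseudo-arc normalization $\dot f_1^2 + \dot f_2^2 - \dot f_3^2 = 1$). The remaining freedom is then absorbed by the action of the Lorentz group $O_1(3)$ together with a translation in the parameter $s$, which places the curve in the stated normal position.

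Alternatively --- and this is the shortcut I would actually prefer --- one may bypass the derivation of constants entirely by appealing to the uniqueness clause established in Section \ref{secnul}: a null curve parameterized by pseudo-arc is determined by its pseudo-torsion up to Lorentz isometry. Thus it suffices to verify, by direct differentiation, that each of the three listed curves $\varepsilon_1$, $\varepsilon_2$, $\varepsilon_3$ is indeed a null curve in pseudo-arc parameterization (that is, $\ddot\varepsilon_i \cdot \ddot\varepsilon_i = 1$) with the claimed constant pseudo-torsion; uniqueness then forces any null helix with that pseudo-torsion to be congruent to the corresponding model. The main obstacle is purely bookkeeping: one must match the normalizing factors (the $\frac{1}{2|\tau|}$, the $\frac{1}{2\tau}$, and the rational coefficients in the $\tau = 0$ case) so that both the pseudo-arc condition $|\ddot\varepsilon| = 1$ and the value of $\tau$ come out exactly right, since these depend delicately on the frequency $\sqrt{2|\tau|}$ appearing inside the trigonometric and hyperbolic functions.
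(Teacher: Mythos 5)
Your proposal is correct, but note that the paper itself offers no proof of this proposition: it is imported verbatim from \cite{F-G-L}, so there is no internal argument to compare against. Of your two routes, the second (direct verification plus uniqueness) is the one that genuinely closes the argument with the least effort, and it leans only on material already established in Section \ref{secnul}: one checks by differentiation that each of $\varepsilon_1,\varepsilon_2,\varepsilon_3$ is null, satisfies $\ddot\varepsilon\cdot\ddot\varepsilon=1$, and has the stated constant pseudo-torsion (e.g.\ for $\varepsilon_1$ one finds $\mathbf{N}=(-\cos,-\sin,0)$, $\mathbf{B}=\tfrac{\sqrt{2|\tau|}}{2}(\sin,-\cos,1)$, and $\dot{\mathbf N}=\tau\mathbf{T}+\mathbf{B}$ exactly when $\tau<0$ has the indicated value), and then the uniqueness-up-to-Lorentz-isometry of a pseudo-arc-parameterized null curve with prescribed pseudo-torsion does the rest. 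Your first route via the constant-coefficient equation $\dddot{f_i}=2\tau\dot f_i$ from \eqref{Tc} is also sound and explains \emph{why} the trichotomy takes this trigonometric/polynomial/hyperbolic form, but as written it leaves the elimination of integration constants by the algebraic constraints of \eqref{taudet} and the $O_1(3)$-action as an unexecuted (though routine) step; if you go that way you should also say explicitly that ``congruent'' absorbs the translation constants arising from $\varepsilon=\int\mathbf{T}\,ds$. Either way the argument is complete in outline and contains no gap.
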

Next we describe the corresponding potential functions.
\begin{thm}\label{t=0}
The potential functions of plane curves whose $L$-evolutes have constant pseudo-torsion $\tau$ are precisely the following:
  \begin{enumerate}
  \item if $\tau <0$, then $f_1(s)=a\cos(\sqrt{2|\tau|}s)+b\sin(\sqrt{2|\tau|}s)+c,$ with $2|\tau|(a^2+b^2)+1=2|\tau|c^2$;
 \item if $\tau=0$, then $f_2(s)=as^2+bs+c,$ with $4ac=1+b^2$;
 \item If $\tau >0$, then $f_3(s)=ae^{\sqrt{2\tau}s}+be^{-\sqrt{2\tau}s}+c,$ with $2\tau c^2+1=8\tau ab$.
\end{enumerate}
\end{thm}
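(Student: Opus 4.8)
The plan is to read the two equivalent forms of the pseudo-torsion equation already recorded in Example \ref{exspiral}. Write \eqref{pseudotorsionf} as the vanishing of
\[
E(s) := 2f\ddot f - (\dot f^2 + 1) - 2\tau f^2,
\]
and recall that its differentiated consequence is \eqref{-1}, namely $\dddot f - 2\tau\dot f - \dot\tau f = 0$. Since we are assuming $\tau$ constant, $\dot\tau = 0$ and \eqref{-1} becomes the linear constant-coefficient ODE $\dddot f - 2\tau\dot f = 0$, with characteristic polynomial $r^3 - 2\tau r = r(r^2 - 2\tau)$. The roots are $r = 0$ and $r = \pm\sqrt{2\tau}$, which are a conjugate imaginary pair $\pm i\sqrt{2|\tau|}$ when $\tau < 0$, a triple root $0$ when $\tau = 0$, and a real pair $\pm\sqrt{2\tau}$ when $\tau > 0$. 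Writing out the general solution in each regime produces exactly the three-parameter families $f_1, f_2, f_3$ displayed in the statement.

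The key point is that \eqref{-1} is obtained from \eqref{pseudotorsionf} by one differentiation, so its general solution carries one spurious degree of freedom: the potential functions are the solutions of the \emph{second-order} equation $E \equiv 0$, a two-parameter subfamily cut out of the three-parameter solution space of \eqref{-1} by a single algebraic constraint. I would make this precise with the observation that, along any solution of \eqref{-1},
\[
\dot E = 2\dot f\ddot f + 2f\dddot f - 2\dot f\ddot f - 4\tau f\dot f = 2f\big(\dddot f - 2\tau\dot f\big) = 0,
\]
so $E$ is \emph{constant}. Hence, for each solution $f$ of \eqref{-1}, imposing $E \equiv 0$ is equivalent to the single scalar equation obtained by setting that constant to zero; this is precisely the algebraic relation among $a,b,c$ to be identified.

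It then remains to evaluate the constant $E$ in each case. For $\tau = 0$ with $f = as^2 + bs + c$ the computation is direct and yields $E = 4ac - b^2 - 1$, giving $4ac = 1 + b^2$. For $\tau < 0$ I would set $\omega = \sqrt{2|\tau|}$ and $P = a\cos(\omega s) + b\sin(\omega s)$, so $f = P + c$, $\ddot f = -\omega^2 P$, and use the amplitude identity $\dot f^2 = \omega^2(a^2 + b^2 - P^2)$; after substituting and cancelling the $P$-dependent terms (which must cancel, since $E$ is constant) one is left with $E = -\omega^2 c^2 + \omega^2(a^2+b^2) + 1$, i.e. the condition $2|\tau|(a^2+b^2) + 1 = 2|\tau|c^2$. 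For $\tau > 0$ the parallel computation with $\omega = \sqrt{2\tau}$, $P = ae^{\omega s} + be^{-\omega s}$, $\ddot f = \omega^2 P$, and $\dot f^2 = \omega^2(P^2 - 4ab)$ gives $E = \omega^2 c^2 + 1 - 4\omega^2 ab$, that is $2\tau c^2 + 1 = 8\tau ab$. These are exactly the three stated constraints, and I expect the mild bookkeeping of cancelling the $P$-dependent terms in the two non-flat cases to be the only place requiring care.

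Finally I would assemble both directions. If $f$ is the potential function of a plane curve whose $L$-evolute has constant pseudo-torsion $\tau$, then $f$ satisfies \eqref{pseudotorsionf}, hence \eqref{-1}, hence lies in one of the three families, and substitution forces the corresponding constraint. Conversely, any positive function $f$ of the stated form satisfying the constraint solves \eqref{-1} and has $E$ constant equal to zero by the constraint, so it solves \eqref{pseudotorsionf}; by Theorem \ref{fund} and the Remark following it, such an $f$ is the potential function of a plane curve whose $L$-evolute has pseudo-torsion the given constant $\tau$. The one genuinely non-algebraic caveat is positivity: to qualify as a potential function $f$ must be positive on the relevant interval, and I would note that the sign conditions hidden in each constraint (for instance $c^2 > a^2 + b^2$ up to the $\tau$-factor when $\tau<0$, forcing $f_1$ to have constant sign) are exactly what guarantees $f$ can be taken positive on a suitable domain.
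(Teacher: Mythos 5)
Your proposal is correct and takes essentially the same route as the paper: differentiate \eqref{pseudotorsionf} to obtain the linear equation \eqref{-1}, solve it explicitly for constant $\tau$ via the characteristic polynomial $r(r^2-2\tau)$, and substitute back to cut out the stated algebraic constraint on $a,b,c$. Your observation that $E=2f\ddot f-(\dot f^2+1)-2\tau f^2$ is a first integral of \eqref{-1} is a tidy way of organizing the substitution step that the paper only sketches (``the remaining cases are deduced similarly''), and the computations check out (up to an immaterial overall sign in your displayed values of $E$ in the $\tau\neq 0$ cases).
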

\begin{proof}
  For $\tau=0$, the general solution of \eqref{-1} is $f(s)=as^2+bs+c$; and any such function is a solution of  \eqref{pseudotorsionf} with $\tau=0$ if, and only if, $4ac=1+b^2$. The remaining cases are deduced similarly.
\end{proof}
The curve $\varepsilon_1$ corresponds to the potential function $f_1$  with $a=b=0$ and $c={1/\sqrt{2|\tau|}}$; $\varepsilon_2$ corresponds to $f_2$  with $a=3/4$, $b=0$ and $c=1/3$; $\varepsilon_3$ corresponds to $f_3$  with $a=b={1}/(2\sqrt{2\tau})$ and $c=0$.
\begin{eg}\label{fcons}
  Take the potential function $f(s)=1/\sqrt{2|\tau|}$, which corresponds to the null helix $\varepsilon_1$. The corresponding plane curve $\gamma$ is the involute of a circle. Explicitly, by \eqref{parameters}, we have $\frac{dt}{ds}=s$,  where $t$ is the arc length parameter of $\gamma$, hence we have $s=\sqrt{2t}$ for $t>0$; from  \eqref{gamma}, we conclude that $\gamma(t)=\frac{1}{|2\tau|}\big(x(t),y(t)\big)$, with
  \begin{align*}
  x(t)&=2\sqrt{|\tau|t}\sin(2\sqrt{|\tau|t})+\cos(2\sqrt{|\tau|t})\\ y(t)&=\sin(2\sqrt{|\tau|t})-2\sqrt{|\tau|t}\cos(2\sqrt{|\tau|t}).
  \end{align*}
\end{eg}

\section{Associated curves}\label{assoc}

In this section we describe, in terms of their potential function, some classes of associated null curves: Bertrand pairs  \cite{BBI,ino-lee}, null curves with common binormal direction \cite{H-I}, and binormal-directional curves \cite{choikim}.

\subsection{Bertrand pairs}
Motivated by the definition of Bertrand curve in the Euclidean space, null Bertrand curves are defined as follows.
\begin{defn}
  Let $\varepsilon:I\to\mathbf{E}_1^3$ be a null curve parameterized by pseudo-arc $s$. The curve $\varepsilon$ is a \emph{null Bertrand curve} if there exists a null curve $\bar\varepsilon:\bar I\to \mathbf{E}_1^3$ and a one-to-one differentiable correspondence $\beta:I\to\bar I$ such that, for each $s\in I$, the principal normal lines of $\varepsilon$  and $\bar\varepsilon$ at $s$ and $\beta(s)$ are equal. In this case, $\bar \varepsilon$ is called a \emph{null Bertrand mate} of $\varepsilon$ and $(\varepsilon,\bar\varepsilon)$ is a \emph{null Bertrand pair}.
\end{defn}


\begin{thm}\cite{BBI,ino-lee}
  Let  $\varepsilon:I\to\mathbf{E}_1^3$ be a null curve  parameterized by pseudo-arc $s$. The null curve $\varepsilon$ is a null Bertrand curve if, and only if, it has nonzero constant pseudo-torsion $\tau$.
  In this case, if $\bar\varepsilon:\bar I\to\mathbf{E}_1^3$ is a null Bertrand mate of $\varepsilon$, with one-to-one correspondence $\beta: I\to\bar I$, then $\bar s:=\beta(s)$ is a pseudo-arc parameter of $\bar \varepsilon$, the null curve $\bar \varepsilon$ has the same pseudo-torsion $\tau$, and $\beta$ satisfies $\dot\beta(s)=\pm1$. In particular, if $\varepsilon$ has nonzero constant pseudo-torsion $\tau$, then \begin{equation}\label{bertrand}
  \bar{\varepsilon}(s)={\varepsilon}(s)-\frac{1}{\tau}\mathbf{N}(s)
  \end{equation} defined on $I$ is a null Bertrand mate of  $\varepsilon$.
  \end{thm}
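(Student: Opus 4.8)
The plan is to prove the three assertions of the theorem in the logical order that is forced by the definitions: first the ``if'' direction (nonzero constant pseudo-torsion guarantees a Bertrand mate), then the ``only if'' direction together with the structural consequences about $\bar s$, $\bar\tau$, and $\dot\beta$. The natural strategy is to write down the most general candidate for a Bertrand mate in terms of the Cartan frame and to translate the defining condition --- that the principal normal lines coincide at corresponding points --- into algebraic and differential constraints via the Frenet equations \eqref{freneteqspace}.

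First I would tackle the converse (``only if'') direction, since it is where the real content lies and it simultaneously produces the formulas for $\bar s$, $\bar\tau$ and $\dot\beta$. Suppose $\bar\varepsilon$ is a null Bertrand mate with correspondence $\beta$. Because the principal normal lines agree, the normal vector $\bar{\mathbf N}(\beta(s))$ must be parallel to $\mathbf N(s)$, and the two curves differ by a multiple of the common normal: I would write $\bar\varepsilon(\beta(s))=\varepsilon(s)+\mu(s)\mathbf N(s)$ for some function $\mu$. Differentiating this relation in $s$ and invoking \eqref{freneteqspace} to expand $\dot{\mathbf N}=\tau\mathbf T+\mathbf B$, the tangent $\frac{d\bar\varepsilon}{d\bar s}\dot\beta$ of $\bar\varepsilon$ gets expressed in the frame $\{\mathbf T,\mathbf B,\mathbf N\}$ as
\begin{equation}\label{bertrandtangent}
\tfrac{d\bar\varepsilon}{d\bar s}\,\dot\beta=(1+\mu\tau)\mathbf T+\mu\mathbf B+\dot\mu\mathbf N.
\end{equation}
Since $\bar\varepsilon$ is null, the left-hand side is lightlike; computing its square with the inner products from \eqref{freneteqspace} forces the $\mathbf N$-component to drop out, i.e. $\dot\mu=0$, so $\mu$ is a constant, and the nullity condition then reads $-2\mu(1+\mu\tau)+\dot\mu^2=0$, giving $1+\mu\tau=0$ and hence $\mu=-1/\tau$ with $\tau$ nowhere zero. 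The requirement that $\bar{\mathbf N}$ be parallel to $\mathbf N$ --- obtained by differentiating \eqref{bertrandtangent} once more and isolating the component that must vanish --- is precisely what will force $\dot\tau=0$; this is the step I expect to be the main obstacle, because one must keep careful track of which frame components survive and show that the surviving coefficient of $\mathbf B$ (say) is a nonzero multiple of $\dot\tau$. Once $\mu=-1/\tau$ is a genuine constant, reading off $|\,\frac{d^2\bar\varepsilon}{d\bar s^2}\,|=1$ from the twice-differentiated relation yields $\dot\beta=\pm1$, whence $\bar s=\beta(s)$ is a pseudo-arc parameter and, by the invariance of $\tau$ under the frame computation, $\bar\tau=\tau$.

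For the ``if'' direction I would simply reverse the computation: given $\tau$ a nonzero constant, define $\bar\varepsilon$ by \eqref{bertrand}, differentiate using \eqref{freneteqspace}, and verify directly that $\dot{\bar\varepsilon}=\varepsilon$'s tangent data combine to give $\dot{\bar\varepsilon}=-\tfrac1\tau\mathbf B$ (up to the sign carried by $\dot\beta$), which is lightlike, so that $\bar\varepsilon$ is a null curve; a further differentiation confirms its normal line coincides with that of $\varepsilon$, exhibiting $\bar\varepsilon$ as a bona fide Bertrand mate. The only care needed here is the bookkeeping that $\frac{d}{ds}(-\tfrac1\tau\mathbf N)=-\tfrac1\tau(\tau\mathbf T+\mathbf B)$ when $\tau$ is constant, so that the $\mathbf T$-terms cancel against $\dot\varepsilon=\mathbf T$ and leave a purely lightlike derivative. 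Assembling the two directions and the intermediate identities $\mu=-1/\tau$, $\dot\beta=\pm1$, $\bar\tau=\tau$ completes the proof.
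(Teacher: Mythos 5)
The paper itself does not prove this theorem --- it is quoted from \cite{BBI,ino-lee} --- so your proposal can only be judged on its own terms. The overall strategy (write $\bar\varepsilon(\beta(s))=\varepsilon(s)+\mu(s)\mathbf{N}(s)$, differentiate, and extract constraints from the Frenet equations) is the standard and correct one, and the ``if'' direction is essentially fine. But there is a genuine gap at the key step of the ``only if'' direction: you claim that the nullity of $\dot{\bar\varepsilon}$ ``forces the $\mathbf{N}$-component to drop out, i.e.\ $\dot\mu=0$''. It does not. With the inner products of the Cartan frame, the nullity of $(1+\mu\tau)\mathbf{T}+\mu\mathbf{B}+\dot\mu\mathbf{N}$ is the single scalar equation $\dot\mu^2-2\mu(1+\mu\tau)=0$, which relates $\dot\mu$, $\mu$ and $\tau$ but by itself permits $\dot\mu\neq 0$; you then feed $\dot\mu=0$ back into this same equation to obtain $1+\mu\tau=0$, so you are extracting two conclusions from one equation. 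The missing ingredient is the Bertrand condition itself: since $\bar{\mathbf{N}}(\beta(s))=\pm\mathbf{N}(s)$ and the tangent of $\bar\varepsilon$ is orthogonal to its own normal, the vector $(1+\mu\tau)\mathbf{T}+\mu\mathbf{B}+\dot\mu\mathbf{N}$ must be orthogonal to $\mathbf{N}$, and taking the inner product with $\mathbf{N}$ gives $\dot\mu=0$ at once. Only then does nullity give $\mu(1+\mu\tau)=0$, and you must also argue that $\mu\neq 0$ (otherwise $\bar\varepsilon=\varepsilon$) before concluding $1+\mu\tau=0$.

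Two smaller points. First, once $\mu$ is a nonzero constant with $1+\mu\tau=0$, the identity $\tau=-1/\mu$ already shows that $\tau$ is a nonzero constant; the further differentiation you flag as ``the main obstacle'' is not needed to get $\dot\tau=0$. It is needed only to obtain $\bar{\mathbf{N}}$, $\dot\beta=\pm1$ and $\bar\tau=\tau$, and these follow cleanly from $\frac{d}{ds}\bigl(\bar\varepsilon\circ\beta\bigr)=\mu\mathbf{B}$ and $\frac{d^2}{ds^2}\bigl(\bar\varepsilon\circ\beta\bigr)=\mu\tau\mathbf{N}$, whose norm is $|\mu\tau|=1$, so that $s$ itself is a pseudo-arc parameter of $\bar\varepsilon\circ\beta$. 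Second, in the ``if'' direction you should record that $\ddot{\bar\varepsilon}=-\mathbf{N}$, which simultaneously shows that $s$ is a pseudo-arc parameter of $\bar\varepsilon$ and that the principal normals are antiparallel, so the normal lines genuinely coincide.
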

\begin{cor}
  Let ($\varepsilon,\bar{\varepsilon})$ be a null Bertrand pair satisfying \eqref{bertrand}. Let $f$ and $\bar f$ be the corresponding potential functions. Then
  \begin{equation}\label{bertrandpotentials}
    \bar f(s)=f(s)-\frac{1}{\tau}\frac{d^2f}{ds^2}(s).
  \end{equation}
\end{cor}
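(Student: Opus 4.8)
The plan is to exploit the simple fact that the potential function of an $L$-evolute is nothing but the third component of its unit (pseudo-arc) tangent vector. Indeed, from \eqref{tn} the tangent vector of $\varepsilon$ is $\mathbf{T}=f\,(\mathbf{n},1)$, so that its third coordinate is exactly $f$; equivalently, $f=du/ds=\dot\varepsilon_3$ since $\varepsilon_3=u$. Hence, to compute $\bar f$ it suffices to express the pseudo-arc tangent vector of $\bar\varepsilon$ in the Cartan frame $\{\mathbf{T},\mathbf{B},\mathbf{N}\}$ of $\varepsilon$ and read off its third coordinate.

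First I would differentiate \eqref{bertrand}. Using the Frenet equations \eqref{freneteqspace} together with the constancy of $\tau$, the relation $\dot{\mathbf{N}}=\tau\mathbf{T}+\mathbf{B}$ gives $\dot{\bar\varepsilon}=\mathbf{T}-\tfrac1\tau(\tau\mathbf{T}+\mathbf{B})=-\tfrac1\tau\mathbf{B}$, and differentiating once more with $\dot{\mathbf{B}}=\tau\mathbf{N}$ yields $\ddot{\bar\varepsilon}=-\mathbf{N}$, so $|\ddot{\bar\varepsilon}|=1$. This confirms that $s$ is again a pseudo-arc parameter of $\bar\varepsilon$ (consistent with $\dot\beta=\pm1$ from the preceding theorem) and identifies its tangent vector as $\bar{\mathbf{T}}=-\tfrac1\tau\mathbf{B}$.

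By the opening observation, $\bar f$ is the third coordinate of $\bar{\mathbf{T}}$, so $\bar f=-\tfrac1\tau(\mathbf{B})_3$. Substituting the explicit third coordinate of the binormal recorded in \eqref{bb}, namely $(\mathbf{B})_3=\tfrac{1}{2f}(\dot f^2+1)$, gives $\bar f=-\tfrac{1}{2\tau f}(\dot f^2+1)$. Finally I would invoke the expression \eqref{pseudotorsionf} for the pseudo-torsion, which after multiplying through by $f$ reads $\tfrac{1}{2f}(\dot f^2+1)=\ddot f-\tau f$; inserting this into the previous line produces $\bar f=-\tfrac1\tau(\ddot f-\tau f)=f-\tfrac1\tau\ddot f$, which is precisely \eqref{bertrandpotentials}.

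The computation itself is routine; the only point demanding care is the bookkeeping of signs and orientation. Because the potential function is \emph{positive} by Definition \ref{defpotential} and tied to a specific choice of $\mathrm{sign}(\dot\phi)$, I would verify that the orientation of $\bar\varepsilon$ inherited from \eqref{bertrand} is compatible with $\bar f=(\bar{\mathbf{T}})_3>0$ (this forces a sign condition on the constant $\tau$, and in the opposite case one reverses the orientation of the underlying plane curve), so that the stated identity holds as an equality of the genuine potential functions. This orientation check is the main, if minor, obstacle.
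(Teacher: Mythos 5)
Your computation is correct and the final identity matches the paper's. The core move is the same in both arguments: differentiate \eqref{bertrand} and read off the third component of the resulting tangent vector, using the fact that the potential function is exactly that component. Where you diverge is in how you evaluate that third component. The paper stays entirely with $\mathbf{T}$: since $\mathbf{N}=\dot{\mathbf{T}}$, differentiating \eqref{bertrand} gives $\bar{\mathbf{T}}=\mathbf{T}-\frac{1}{\tau}\frac{d^2\mathbf{T}}{ds^2}$, and \eqref{tangent} says the third component of $\mathbf{T}$ is $f$, so \eqref{bertrandpotentials} drops out in one line with no further identities needed. You instead push the Frenet equations through to get $\bar{\mathbf{T}}=-\frac{1}{\tau}\mathbf{B}$, then need both the explicit binormal \eqref{bb} and the pseudo-torsion equation \eqref{pseudotorsionf} to convert $-\frac{1}{2\tau f}(\dot f^2+1)$ back into $f-\frac{1}{\tau}\ddot f$. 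Your route is longer but buys two things the paper's does not make explicit: the structural fact that a Bertrand mate is, up to the constant factor $-1/\tau$, the binormal-directional curve of $\varepsilon$ (which ties in nicely with the last subsection of Section \ref{assoc}, where $\bar f=\frac{1}{2f}(\dot f^2+1)$ appears for binormal-directional curves), and the observation that $\bar f=-\frac{1}{2\tau f}(\dot f^2+1)$ is manifestly negative when $\tau>0$, so that the stated formula can only be an equality of genuine (positive) potential functions after an orientation adjustment. That positivity caveat is real and is glossed over by the paper, so flagging it is a point in your favour rather than a gap.
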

\begin{proof}
Differentiating \eqref{bertrand}, it follows that $\bar{\mathbf{T}}(s)={\mathbf{T}}(s)-\frac{1}{\tau}\frac{d^2\mathbf{T}}{ds^2}(s)$. In view of \eqref{tangent}, and equating the third components, we
conclude \eqref{bertrandpotentials}.
\end{proof}
\begin{cor}
  Let $\gamma$ and $\bar \gamma$ be two plane curves in $\mathbf{E}^2$ an let $\varepsilon$ and  $\bar{\varepsilon}$  be the corresponding $L$-evolutes. Assume that  $(\varepsilon,\bar{\varepsilon})$ is a null Bertrand pair with constant pseudo-torsion $\tau$ satisfying \eqref{bertrand}. Then $\gamma$ and $\bar \gamma$ are congruent in $\mathcal{L}_I$ if, and only if the potential function $f$ of $\gamma$ is  $f(s)=\frac{1}{\sqrt{2|\tau|}}$ (see example  \ref{fcons}).
 \end{cor}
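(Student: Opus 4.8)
The plan is to reduce the $\mathcal{L}_I$-congruence statement to a condition on potential functions, and then to exploit the constancy of $\tau$. First I would recall from Theorem \ref{fund} that two plane curves are $\mathcal{L}_I$-congruent precisely when they have the same potential function. Since $\varepsilon$ is a null Bertrand curve, its mate $\bar\varepsilon$ is parameterized by the pseudo-arc $\bar s=\beta(s)$ with $\dot\beta(s)=\pm1$, so the potential functions $f$ of $\gamma$ and $\bar f$ of $\bar\gamma$ may be legitimately compared as functions of the common pseudo-arc parameter $s$. Hence $\gamma$ and $\bar\gamma$ are $\mathcal{L}_I$-congruent if, and only if, $f=\bar f$.

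Next I would bring in the Bertrand relation between the two potential functions. By \eqref{bertrandpotentials} we have $\bar f=f-\frac{1}{\tau}\ddot f$, so that $f=\bar f$ is equivalent to $\frac{1}{\tau}\ddot f\equiv 0$. Because a null Bertrand curve has nonzero constant pseudo-torsion, $\tau\neq 0$, and therefore $f=\bar f$ holds if, and only if, $\ddot f\equiv 0$, i.e. $f$ is an affine function of $s$.

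It then remains to identify which affine potential functions are compatible with the assumption that $\varepsilon$ has constant pseudo-torsion $\tau$. Writing $f(s)=As+B$ and substituting $\ddot f=0$ into \eqref{pseudotorsionf} gives $\tau=-\frac{A^2+1}{2(As+B)^2}$; since $A^2+1\neq 0$, constancy of $\tau$ forces $A=0$, so $f$ is a positive constant $B$ with $\tau=-\frac{1}{2B^2}$. Using $f>0$ and solving for $B$ yields $f(s)=\frac{1}{\sqrt{2|\tau|}}$ (which also exhibits that $\tau<0$ is forced, in agreement with case (1) of Theorem \ref{t=0}, and recovers the involute of a circle of Example \ref{fcons}). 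Conversely, for this constant $f$ one has $\ddot f=0$, so $\bar f=f$ by \eqref{bertrandpotentials} and the curves are $\mathcal{L}_I$-congruent. The argument is essentially a one-line calculation once the right tools are in place; the only point that needs care is the legitimacy of comparing $f$ and $\bar f$ in a single parameter, which is exactly what $\dot\beta=\pm1$ supplies.
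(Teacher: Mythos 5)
Your proof is correct, but it takes a genuinely different route from the paper's. The paper first invokes the explicit classification of Theorem \ref{t=0} (potential functions whose $L$-evolutes have constant pseudo-torsion), substitutes those closed forms into \eqref{bertrandpotentials}, and checks case by case that $\bar f=f$ forces $a=b=0$, $c=1/\sqrt{2|\tau|}$ when $\tau<0$ and is impossible when $\tau>0$. You instead observe directly from \eqref{bertrandpotentials} that, since $\tau\neq 0$ for a Bertrand curve, $\bar f=f$ is equivalent to $\ddot f\equiv 0$, and then feed the affine ansatz $f(s)=As+B$ into \eqref{pseudotorsionf} to get $\tau=-\tfrac{A^2+1}{2(As+B)^2}$, whose constancy forces $A=0$ and $B=1/\sqrt{2|\tau|}$. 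This bypasses Theorem \ref{t=0} entirely, treats both signs of $\tau$ in a single stroke (the exclusion of $\tau>0$ falls out of the sign of $-\tfrac{A^2+1}{2B^2}$ rather than from a separate computation with exponentials), and makes transparent \emph{why} the answer is a constant potential; the paper's version, by contrast, reuses machinery it has already built and displays the full family of mates $\bar f=-a\cos(\sqrt{2|\tau|}s)-b\sin(\sqrt{2|\tau|}s)+c$ along the way. Both arguments rest on the same two pillars — the equivalence of $\mathcal{L}_I$-congruence with equality of potential functions (Theorem \ref{fund}, legitimized here by $\dot\beta=\pm1$) and the relation \eqref{bertrandpotentials} — so the divergence is only in the final identification step, where your reduction to $\ddot f\equiv 0$ is the cleaner of the two.
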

 \begin{proof}
   Since $\varepsilon$ and $\bar{\varepsilon}$ have the same pseudo-arc $s$, $\gamma$ and $\bar \gamma$ are congruent in $\mathcal{L}_I$ if, and only if, $\bar f= f$. If $\tau<0$, we have, from \eqref{bertrandpotentials},
   $$\bar f(s)=-a\cos(\sqrt{2|\tau|}s)-b\sin(\sqrt{2|\tau|}s)+c, $$
   with $2|\tau|(a^2+b^2)+1=2|\tau|c^2$. Hence $\bar f= f$ if, and only if, $a=b=0$ and $c=\frac{1}{\sqrt{2|\tau|}}$.

  Similarly, if $\tau>0$, it is easy to check that we can not have $\bar f= f$.
 \end{proof}
\subsection{Null curves with common binormal lines}

\begin{thm}\label{ocontra}
    Let $\varepsilon:I\to\mathbf{E}_1^3$ be a null curve parameterized by pseudo-arc parameter $s$, with pseudo-torsion  $\tau$. Then the following assertions are equivalent:
    \begin{enumerate}
      \item[a)] there exists a null curve $\bar \varepsilon:\bar I\to\mathbf{E}_1^3$ and a one-to-one correspondence $\beta:I\to\bar I$ such that, for each $s\in I$, the binormal lines of $\varepsilon$ and $\bar \varepsilon$ are equal  at $s$ and $\beta(s)$, and  $\bar s:=\beta(s)$ is a pseudo-arc parameter of $\bar \varepsilon$;
          \item[b)] $\tau$ satisfies
  \begin{equation}\label{condi}
    a_0^2v(s)^4=\pm\big(1+v(s)\dot \tau(s)\big),
  \end{equation}
 for some nonzero constant $a_0$, where $v(s)$ satisfies
   \begin{equation}\label{ss3}
\frac{1}{v(s)}=-\frac12\int\tau^2ds.
\end{equation}
    \end{enumerate}
Moreover, in this case, the pseudo-torsion of $\bar\varepsilon$ satisfies $\bar{\tau}(\bar s=\beta(s))=\pm\tau(s)$.
\end{thm}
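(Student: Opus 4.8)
The plan is to work entirely in the Cartan frame $\{\mathbf T,\mathbf B,\mathbf N\}$ of $\varepsilon$, using the Frenet equations \eqref{freneteqspace} (so $\dot{\mathbf T}=\mathbf N$, $\dot{\mathbf B}=\tau\mathbf N$, $\dot{\mathbf N}=\tau\mathbf T+\mathbf B$) together with the scalar products $\mathbf T\cdot\mathbf T=\mathbf B\cdot\mathbf B=0$, $\mathbf N\cdot\mathbf N=1$, $\mathbf T\cdot\mathbf B=-1$ and $\mathbf T\cdot\mathbf N=\mathbf B\cdot\mathbf N=0$. The three geometric requirements in a) --- lying on the binormal line, having a parallel binormal, and being pseudo-arc parameterized --- will each be converted into a scalar ODE for the two unknown functions $v$ (the coordinate along the binormal) and $\beta$ (the reparameterization).

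First I would assume a) and record that $\bar\varepsilon(\beta(s))$ lies on the binormal line of $\varepsilon$ at $s$, i.e.
$$\bar\varepsilon(\beta(s))=\varepsilon(s)+v(s)\mathbf B(s)$$
for some function $v$. Differentiating in $s$ and applying the Frenet equations gives $\dot\beta\,\bar{\mathbf T}=\mathbf T+\dot v\,\mathbf B+v\tau\,\mathbf N$, where $\bar{\mathbf T}$ is the unit tangent of $\bar\varepsilon$ at $\beta(s)$. Imposing that $\bar\varepsilon$ is null, $\bar{\mathbf T}\cdot\bar{\mathbf T}=0$, and expanding with the frame relations collapses everything to $-2\dot v+v^2\tau^2=0$; this is $\frac{d}{ds}(1/v)=-\tfrac12\tau^2$, which integrates to \eqref{ss3}. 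Thus \eqref{ss3} is forced by the null condition alone.

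Next I would handle the parallel-binormal requirement. Since binormals are lightlike and $\bar{\mathbf N}^{\perp}$ is a Lorentzian plane meeting the light cone in exactly the two directions $\bar{\mathbf T}$ and $\bar{\mathbf B}$, the relation $\mathbf B\parallel\bar{\mathbf B}$ is equivalent to $\mathbf B\cdot\bar{\mathbf N}=0$ (the alternative $\mathbf B\parallel\bar{\mathbf T}$ is impossible, as $\bar{\mathbf T}$ has nonzero $\mathbf T$-component). Computing $\bar{\mathbf N}=\frac{d^2\bar\varepsilon}{d\bar s^2}$ by the chain rule and pairing with $\mathbf B$ yields $\ddot\beta/\dot\beta=v\tau^2$; combined with $\dot v=\tfrac12v^2\tau^2$ this integrates to $\dot\beta=a_0v^2$ for a nonzero constant $a_0$. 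Feeding these relations back into $\bar{\mathbf N}\cdot\bar{\mathbf N}$, the $\mathbf T$- and $\mathbf B$-components of $\bar{\mathbf N}$ drop out and the $\mathbf N$-component simplifies --- after substituting $2\dot v\tau=v^2\tau^3$ --- to $(1+v\dot\tau)/\dot\beta^2$. The pseudo-arc condition $\bar{\mathbf N}\cdot\bar{\mathbf N}=1$ then reads $(1+v\dot\tau)/\dot\beta^2=\pm1$, i.e. $a_0^2v^4=\pm(1+v\dot\tau)$, which is \eqref{condi}. This proves a)$\Rightarrow$b); the converse follows by reversing the construction, defining $v$ through \eqref{ss3}, then $\beta$ through $\dot\beta=a_0v^2$, and $\bar\varepsilon=\varepsilon+v\mathbf B$, and verifying that \eqref{condi} is exactly the identity needed for $\bar s=\beta(s)$ to be pseudo-arc. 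For the final assertion, normalizing $\bar{\mathbf T}\cdot\bar{\mathbf B}=-1$ forces $\bar{\mathbf B}=\dot\beta\,\mathbf B$, whence $\bar\tau=\frac{d\bar{\mathbf B}}{d\bar s}\cdot\bar{\mathbf N}=\tau\cdot\big((1+v\dot\tau)/\dot\beta^2\big)=\pm\tau$.

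I expect the main obstacle to be the bookkeeping in computing $\bar{\mathbf N}$: because $\bar s=\beta(s)$ is not linear in $s$, the second derivative produces a $\ddot\beta$ term, and one must verify that the $\mathbf T$- and $\mathbf B$-directions cancel and that the $\mathbf N$-coefficient reduces precisely to $(1+v\dot\tau)/\dot\beta^2$. These cancellations hinge on substituting the two previously derived ODEs $\dot v=\tfrac12v^2\tau^2$ and $\ddot\beta/\dot\beta=v\tau^2$ at the right stage; getting those substitutions in the correct order is the delicate point, though no genuinely hard analysis is involved.
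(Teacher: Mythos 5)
Your proposal is correct and follows essentially the same route as the paper's proof: writing $\bar\varepsilon(\beta(s))=\varepsilon(s)+v(s)\mathbf{B}(s)$, extracting $-2\dot v+v^2\tau^2=0$ from nullity, $\ddot\beta/\dot\beta=v\tau^2$ (the paper's $\dot a=av\tau^2$ with $a=\dot\beta$) from the binormal condition, integrating to $\dot\beta=a_0v^2$, and obtaining \eqref{condi} from $\bar{\mathbf{N}}\cdot\bar{\mathbf{N}}=1$. The only cosmetic difference is that you encode parallelism of binormals via $\mathbf{B}\cdot\bar{\mathbf{N}}=0$ and recover $\bar{\mathbf{B}}=\dot\beta\,\mathbf{B}$ at the end, whereas the paper posits $\bar{\mathbf{B}}=a\mathbf{B}$ at the outset and deduces $a=\dot\beta$ from $\bar{\mathbf{T}}\cdot\bar{\mathbf{B}}=-1$.
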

\begin{proof}
Assume that $\bar s=\beta(s)$ is a pseudo-arc parameter of $\bar \varepsilon$ and that $\varepsilon$ and $\bar \varepsilon$ have common binormal lines at corresponding points, that is $$\bar{\mathbf{B}}(\beta(s))=a(s)\mathbf{B}(s),$$
and
$$\bar \varepsilon(\beta(s))= \varepsilon(s)+v(s)\mathbf{B}(s),$$
for some function $v(s)\neq 0$. Differentiating this with respect to $s$, we get
\begin{equation}\label{ss1}
\dot\beta \bar{\mathbf{T}}={\mathbf{T}}+\dot v \mathbf{B}+v\tau \mathbf{N}.
\end{equation}
Taking the inner product of both terms of this equation with $\bar{\mathbf{B}}$, we obtain
\begin{equation}\label{ss2}
\dot\beta=a.
\end{equation}
Since $\bar{\mathbf{T}}$ is a lightlike vector, that is $\bar{\mathbf{T}}\cdot \bar{\mathbf{T}}=0$, it also follows from \eqref{ss1} that
\begin{equation}\label{st1}-2\dot v+v^2\tau^2=0,\end{equation} which means that \eqref{ss3} holds.

Differentiating \eqref{ss1} with respect to $s$, and taking \eqref{ss2} into account, we have
\begin{equation}\label{on}
a^3\bar{\mathbf{N}}=\big(a+av\dot\tau+2a\dot v\tau-\dot a v\tau \big)\mathbf{N}+\big(a\ddot v+av\tau-\dot a\dot v\big)\mathbf{B}+\big(av\tau^2-\dot a \big)\mathbf{T}.
\end{equation}
Since $\bar{\mathbf{N}}\cdot\bar{\mathbf{B}}=0$, the component of  $\bar{\mathbf{N}}$ along $\mathbf{T}$ must vanish, that is
\begin{equation}\label{ss4}
\dot a=av\tau^2.
\end{equation}
Hence, taking \eqref{st1} and \eqref{ss4} into account, we can rewrite \eqref{on} as
$$a^3\bar{\mathbf{N}}=a(1+v\dot\tau)\mathbf{N}+av\tau(1+\dot\tau v)\mathbf{B}.$$
Now, since $\bar{\mathbf{N}}\cdot\bar{\mathbf{N}}=1$,  from this we get
\begin{equation}\label{avt}
  a^4=(1+v\dot\tau)^2.
\end{equation}
Hence
$$\bar{\mathbf{N}}=\pm (\mathbf{N}+ v\tau\mathbf{B}).$$

By differentiation of $\bar{\mathbf{B}}(\beta(s))=a(s)\mathbf{B}(s)$ with respect to $s$, it follows that
 $$a\dot{\bar{\mathbf{B}}}=\dot a \mathbf{B}+a\tau\mathbf{N}=a\tau(\mathbf{N}+v\tau \mathbf{B})=\pm a\tau \bar{\mathbf{N}}.$$ Since the pseudo-torsion $\bar\tau$ is defined by $\dot{\bar{\mathbf{B}}}=\bar\tau\mathbf{N}$, we conclude that $\bar\tau(\beta(s))=\pm \tau(s)$. Observe also that \eqref{st1} and \eqref{ss4} imply that $\dot a/a=2\dot v/v$, and consequently $a=a_0v^2$ for some constant $a_0$. Inserting this in \eqref{avt}, we obtain  \eqref{condi}.

 Conversely, given a null curve $\varepsilon$ with pseudo-arc parameter $s$ and pseudo-torsion $\tau$, take a function $v(s)$ satisfying  \eqref{condi} and $\eqref{ss3}$ for some nonconstant $a_0$.  Consequently, $v(s)$ also satisfies \eqref{st1}. For $a=a_0v^2$, it is clear that the equalities  \eqref{ss4} and \eqref{avt} hold. Define $\zeta(s)=\varepsilon(s)+v(s)\mathbf{B}(s)$. Differentiating twice with respect to $s$, we get
 \begin{align*}
   \dot\zeta(s)=\mathbf{T}+\dot v\mathbf{B}+v\tau \mathbf{N},\qquad  \ddot\zeta(s)=(1+2\dot v\tau +v\dot\tau)\mathbf{N}+(\ddot v+v\tau)\mathbf{B}+ v\tau^2\mathbf{T}.
 \end{align*}
 From this, straightforward  computations show that $\zeta$ is a null curve and that $\big|\ddot\zeta\big|^2=a^4$. This means, taking \eqref{genpseudoarc} into account, that the parameter defined by
 $\bar s=\beta(s)$, with $\beta$ satisfying $\dot\beta=a$, is a pseudo-arc parameter of $\zeta$. Consider the corresponding reparameterization $\bar \varepsilon=\zeta\circ\beta^{-1}$. We can check as above that the tangent vector $\bar{\mathbf{T}}$ and the principal normal vector $\bar{\mathbf{N}}$ of $\bar\varepsilon$ satisfy
 \begin{equation}\label{t1a}
 \bar{\mathbf{T}}=\frac{1}{a}(\mathbf{T}+\dot v\mathbf{B}+v\tau \mathbf{N}),\quad \bar{\mathbf{N}}=\pm (\mathbf{N}+ v\tau\mathbf{B}).\end{equation}
 Since, by definition, $\bar{\mathbf{B}}$ is the unique null vector such that $\bar{\mathbf{B}}\cdot \bar{\mathbf{T}}=-1$ and $\bar{\mathbf{B}}\cdot \bar{\mathbf{N}}=0$, we conclude that $\bar{\mathbf{B}}(\beta(s))=a(s)\mathbf{B}(s).$
\end{proof}

Given a curve $\gamma:I\to\mathbf{E}^3 $ in the Euclidean space $\mathbf{E}^3$, parameterized by arc length $t$, if there exists another curve $\bar \varepsilon:\bar I\to\mathbf{E}^3$ and a one-to-one correspondence $\beta:I\to\bar I$ such that, for each $t\in I$, the binormal lines of $\gamma$ and $\bar \gamma$ are equal  at $t$ and $\beta(t)$, and  $\bar t:=\beta(t)$ is an arc length parameter of $\bar \gamma$, then both curves are plane curves, that is, their torsions vanish identically (see \cite{GD}, page 161). For null helices in the Minkowski space we have a similar result.
\begin{cor}\label{corolint}
A null helix $\varepsilon$, parameterized by pseudo-arc $s$, with constant pseudo-torsion $\tau$, admits a null curve $\bar \varepsilon$, parameterized  by pseudo-arc $\bar s=\beta(s)$, with common binormal lines at corresponding points (that is, $\bar{\mathbf{B}}(\beta(s))=a(s)\mathbf{B}(s)$) if, and only if, $\tau=0$.
\end{cor}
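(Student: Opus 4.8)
The plan is to read the statement directly off the characterization in Theorem \ref{ocontra}, specialized to the case of constant pseudo-torsion. Since $\varepsilon$ is a null helix, $\tau$ is a (possibly zero) constant, so $\dot\tau\equiv 0$. First I would substitute $\dot\tau=0$ into the defining relation \eqref{condi}, which then collapses to $a_0^2\,v(s)^4=\pm 1$ for some nonzero constant $a_0$. Because the right-hand side is a nonzero constant and $a_0\neq 0$, this forces $v$ to be a nonzero \emph{constant} function of $s$.

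Next I would analyze \eqref{ss3} under the same hypothesis that $\tau$ is constant. Integrating gives $1/v(s)=-\tfrac12\tau^2 s + C$ for some constant $C$, so $1/v$ is an affine function of $s$ with slope $-\tfrac12\tau^2$. This affine function is constant in $s$ if and only if $\tau=0$. Combining the two observations, the simultaneous validity of \eqref{condi} and \eqref{ss3} with a common $v$ and a nonzero constant $a_0$ — which by Theorem \ref{ocontra} is exactly the existence of the desired $\bar\varepsilon$ with common binormal lines at corresponding points — forces $v$ to be constant, and by \eqref{ss3} this is possible precisely when $\tau=0$.

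To complete the forward implication I would check that $\tau=0$ genuinely produces a solution: taking $C\neq 0$ in \eqref{ss3} yields a nonzero constant $v$, and then choosing $a_0=v^{-2}$ together with the $+$ sign satisfies \eqref{condi}; hence condition (b) of Theorem \ref{ocontra} holds and the associated null curve $\bar\varepsilon$ with common binormal lines exists. Conversely, when $\tau\neq 0$ the function $v$ dictated by \eqref{ss3} is strictly non-constant, which contradicts the reduced relation $a_0^2\,v^4=\pm 1$; thus no admissible $a_0$ exists and no such $\bar\varepsilon$ can exist.

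I do not anticipate a serious obstacle, since the corollary is essentially a direct specialization of Theorem \ref{ocontra}. The only points requiring care are the bookkeeping of the integration constant $C$ in \eqref{ss3} (to guarantee a nonzero $v$ in the case $\tau=0$) and the verification that the non-constancy of $v$ forced by \eqref{ss3} when $\tau\neq 0$ is genuinely incompatible with the reduced form $a_0^2\,v^4=\pm 1$ of \eqref{condi}.
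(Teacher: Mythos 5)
Your proposal is correct and follows essentially the same route as the paper: both specialize condition (b) of Theorem \ref{ocontra} to constant $\tau$, using \eqref{ss3} to see that $1/v$ is affine in $s$ with slope $-\tfrac12\tau^2$ while \eqref{condi} with $\dot\tau=0$ forces $v$ to be constant, which is compatible only when $\tau=0$. The only cosmetic difference is the order in which the two equations are exploited.
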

\begin{proof}
  If $\tau$ is constant, then, any function $v(s)$ defined by \eqref{ss3} must be of the form $$v(s)=\frac{1}{v_0-\frac12\tau^2s}$$ for some integration constant $v_0$, and \eqref{condi} implies that
  $$\frac{a_0^2}{(v_0-\frac12\tau^2s)^4}=\pm1,$$ which holds if, and only if, $\tau=0$ and $a_0=\pm v_0^2$.
\end{proof}

Since the potential function $\bar f$ of $\bar\varepsilon$ is  the third component of $\bar{\mathbf{T}}$, we see from \eqref{tn}, \eqref{bb} and  \eqref{t1a} that
$$\bar{f}\big(\bar{s}=\beta(s)\big)=\frac{1}{a_0v^2(s)}\Big\{f(s)+\frac{v^2(s)\tau^2(s)}{4f(s)}(\dot{f}^2(s)+1)+{v(s)\tau(s)}\dot{f}(s)\Big\}.$$

\begin{rem}
  In \cite{H-I}, the authors investigated pairs of null curves possessing common binormal lines. We point out that  Theorem \ref{ocontra} does not contradicts the main result in \cite{H-I} since in that paper the parameters are not necessarily the pseudo-arc parameters. If one considers that the binormals coincide at corresponding points, $\bar{\mathbf{B}}(\beta(s))=a(s)\mathbf{B}(s)$, and that $s$ and $\bar s = \beta(s)$ are precisely the pseudo-arc parameters of $\varepsilon$ and  $\bar\varepsilon$, then the additional condition \eqref{condi} is necessary.

\end{rem}
\subsection{W-directional curves}

\begin{defn}
 Let $\varepsilon$ be a null curve in $\mathbf{E}_1^3$
parameterized by the pseudo-arc
parameter $s$ and $\mathbf{W}$ a null vector field along $\varepsilon$. A null curve $\bar \varepsilon$ parameterized by pseudo-arc $\bar s=\beta(s)$ is a $\mathbf{W}$-\emph{directional  curve} of $\varepsilon$ if the tangent vector $\bar{\mathbf{T}}$ coincides with
the  vector $\mathbf{W}$  at corresponding points: $\bar{\mathbf{T}}(\beta(s))=\mathbf{W}(s)$.
\end{defn}

Let us consider first the \emph{binormal-directional  curve} of $\varepsilon$, that is, take $\mathbf{W}=\mathbf{B}$, where $\mathbf{B}$ is the binormal vector field of $\varepsilon$.
 \begin{thm}\cite{choikim}
 Let $\varepsilon$ be a null curve in $\mathbf{E}_1^3$
parameterized by the pseudo-arc
parameter $s$ with  non-zero pseudo-torsion $\tau$. Let $\bar\varepsilon$ be its null binormal-directional
curve with pseudo-torsion $\bar\tau$ and pseudo-arc parameter $\bar s=\beta(s)$. Then, $\frac{d\bar{s}}{ds}=\pm \tau (s)$ and
\begin{equation*}
  \bar \tau(\beta(s))=\frac{1}{\tau(s)}.
\end{equation*}
\end{thm}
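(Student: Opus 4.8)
The plan is to set up the binormal-directional curve explicitly and compute its Cartan frame, extracting the pseudo-arc reparameterization and pseudo-torsion along the way. I would define $\bar\varepsilon$ via the one-to-one correspondence so that its tangent vector equals the binormal of $\varepsilon$, that is $\bar{\mathbf{T}}(\beta(s)) = \mathbf{B}(s)$, which by the chain rule means $\dot\beta(s)\,\bar{\mathbf{T}}(\beta(s)) = \frac{d}{ds}\bar\varepsilon(\beta(s))$, so $\bar\varepsilon$ is obtained by integrating $\mathbf{B}(s)$ against the reparameterization. First I would differentiate the defining relation $\dot\beta\,\bar{\mathbf{T}} = \mathbf{B}$ using the Frenet equations \eqref{freneteqspace} for $\varepsilon$, which give $\dot{\mathbf{B}} = \tau\mathbf{N}$. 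This yields $\ddot\beta\,\bar{\mathbf{T}} + \dot\beta^2\,\bar{\mathbf{N}} = \tau\mathbf{N}$, and since $\bar\varepsilon$ is to be parameterized by pseudo-arc we require $\bar{\mathbf{N}}\cdot\bar{\mathbf{N}} = 1$.

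The key step is to pin down $\dot\beta$. Taking the norm of the displayed relation and using $\bar{\mathbf{T}}\cdot\bar{\mathbf{T}} = 0$, $\bar{\mathbf{T}}\cdot\bar{\mathbf{N}} = 0$, and $\mathbf{N}\cdot\mathbf{N} = 1$, I expect the $\bar{\mathbf{T}}$-terms to drop out when squaring, leaving $\dot\beta^4\,(\bar{\mathbf{N}}\cdot\bar{\mathbf{N}}) = \tau^2$, hence $\dot\beta^2 = |\tau|$ — but a cleaner route uses that the binormal condition already forces a relation. Since $\bar{\mathbf{T}} = \mathbf{B}/\dot\beta$ and $\mathbf{B}$ is lightlike, this is automatically consistent with nullity; the pseudo-arc condition $|\bar{\mathbf{N}}|=1$ is what selects $\dot\beta$. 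From $\dot\beta^2\,\bar{\mathbf{N}} = \tau\mathbf{N} - \ddot\beta\,\bar{\mathbf{T}}$ and $\bar{\mathbf{T}} = \mathbf{B}/\dot\beta$, I would compute $\bar{\mathbf{N}}\cdot\bar{\mathbf{N}}$ using $\mathbf{N}\cdot\mathbf{B} = 0$ and $\mathbf{N}\cdot\mathbf{N}=1$, obtaining $\dot\beta^4 = \tau^2$, so $\frac{d\bar s}{ds} = \dot\beta = \pm\tau(s)$, as claimed. This also gives $\bar{\mathbf{N}} = \tfrac{1}{\dot\beta^2}(\tau\mathbf{N} - \ddot\beta\,\bar{\mathbf{T}})$, and since $\dot\beta = \pm\tau$ one finds $\bar{\mathbf{N}} = \pm\mathbf{N} - \tfrac{\ddot\beta}{\tau^2}\bar{\mathbf{T}}$.

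For the pseudo-torsion, I would next determine $\bar{\mathbf{B}}$ as the unique lightlike vector with $\bar{\mathbf{B}}\cdot\bar{\mathbf{T}} = -1$ and $\bar{\mathbf{B}}\cdot\bar{\mathbf{N}} = 0$, then use $\bar\tau = -\dot{\bar{\mathbf{N}}}\cdot\bar{\mathbf{B}}$ (equivalently, read $\bar\tau$ off the Frenet system of $\bar\varepsilon$). Expressing everything in the frame $\{\mathbf{T},\mathbf{B},\mathbf{N}\}$ of $\varepsilon$, differentiating $\bar{\mathbf{N}}$ with respect to $\bar s$ (i.e.\ applying $\tfrac{1}{\dot\beta}\tfrac{d}{ds}$), and pairing against $\bar{\mathbf{B}}$ should collapse, after using $\dot\beta = \pm\tau$ and the Frenet relations for $\varepsilon$, to $\bar\tau = 1/\tau$. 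The main obstacle I anticipate is the bookkeeping in this last differentiation: since $\bar{\mathbf{N}}$ carries the factor $\ddot\beta/\tau^2$ and $\beta$ is itself defined implicitly through $\dot\beta = \pm\tau$, one must carefully convert $s$-derivatives into $\bar s$-derivatives and verify that the contributions of $\ddot\beta$ and $\dddot\beta$ conspire to leave the clean reciprocal $1/\tau$. I would handle this by working throughout in the $\varepsilon$-frame, substituting $\dot\beta = \pm\tau$ (hence $\ddot\beta = \pm\dot\tau$) at the very end, so that the algebra reduces to tracking coefficients of $\mathbf{N}$ and $\mathbf{B}$ rather than carrying unevaluated higher derivatives of $\beta$.
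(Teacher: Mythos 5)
The paper states this theorem only as a citation to \cite{choikim} and supplies no proof of its own, so your argument has to stand alone. Your overall strategy (differentiate the defining relation, use the pseudo-arc normalization to pin down $\dot\beta$, then identify $\bar{\mathbf{B}}$ and read off $\bar\tau$) is the right one, but the execution breaks at the key step. You correctly state the defining condition $\bar{\mathbf{T}}(\beta(s))=\mathbf{B}(s)$ and then silently replace it by $\dot\beta\,\bar{\mathbf{T}}=\mathbf{B}$, i.e.\ $\bar{\mathbf{T}}=\mathbf{B}/\dot\beta$. These describe different curves: the first is the binormal-directional curve of the paper's definition, whose pseudo-arc unit tangent \emph{equals} $\mathbf{B}$; the second is the curve $\int\mathbf{B}\,ds$ reparameterized by pseudo-arc, whose unit tangent is only \emph{parallel} to $\mathbf{B}$. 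With your relation, the computation you describe correctly yields $\dot\beta^4\,(\bar{\mathbf{N}}\cdot\bar{\mathbf{N}})=\tau^2$, i.e.\ $\dot\beta=\pm\sqrt{|\tau|}$; the passage from $\dot\beta^4=\tau^2$ to $\dot\beta=\pm\tau$ is a non sequitur, and $\pm\sqrt{|\tau|}$ is in fact the true answer for the curve you actually constructed. So as written the argument does not establish $\frac{d\bar s}{ds}=\pm\tau$, and the extra term $-\frac{\ddot\beta}{\tau^2}\bar{\mathbf{T}}$ you then carry inside $\bar{\mathbf{N}}$ is an artifact of the same misidentification, which would also contaminate the $\bar\tau$ computation you only sketch.

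The repair is to differentiate the actual defining relation. From $\bar{\mathbf{T}}(\beta(s))=\mathbf{B}(s)$ and $\dot{\mathbf{B}}=\tau\mathbf{N}$, the chain rule gives $\dot\beta(s)\,\bar{\mathbf{N}}(\beta(s))=\tau(s)\,\mathbf{N}(s)$ in one step; since $|\bar{\mathbf{N}}|=|\mathbf{N}|=1$, this forces $\dot\beta=\pm\tau$ and $\bar{\mathbf{N}}=\pm\mathbf{N}$, with no component along $\bar{\mathbf{T}}$ at all. Then $\bar{\mathbf{B}}=\mathbf{T}$, because $\mathbf{T}$ is lightlike, orthogonal to $\mathbf{N}$, and satisfies $\mathbf{T}\cdot\mathbf{B}=-1$, which is exactly the characterization of the binormal of $\bar\varepsilon$. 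Finally, $\frac{d\bar{\mathbf{B}}}{d\bar s}=\frac{1}{\dot\beta}\dot{\mathbf{T}}=\frac{1}{\dot\beta}\mathbf{N}=\frac{1}{\dot\beta}\cdot\frac{\dot\beta}{\tau}\bar{\mathbf{N}}=\frac{1}{\tau}\bar{\mathbf{N}}$, so $\bar\tau(\beta(s))=1/\tau(s)$ directly from the Frenet equation $\frac{d\bar{\mathbf{B}}}{d\bar s}=\bar\tau\bar{\mathbf{N}}$, with no sign ambiguity and none of the bookkeeping with $\ddot\beta$ and $\dddot\beta$ that you anticipated.
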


Since $\bar{\mathbf{T}}(\beta(s))=\mathbf{B}(s)$, from \eqref{bb}  we see that the potential function of $\bar \varepsilon$ is given by
$$\bar f\big(\bar s=\beta(s)\big)=\frac{1}{2f(s)}(\dot f^2(s)+1),$$
with  $\frac{d\bar{s}}{ds}=\pm \tau (s)$.
\begin{eg}
  Consider the potential function $f(s)=s/2$. As shown in Example \ref{exspiral}, this is the potential function associated to the $L$-evolute $\varepsilon$  of the logarithmic spiral \eqref{spirallog}, which has pseudo-torsion $\tau(s)=-\frac{5}{2s^2}$. The potential function associated to the binormal-directional curve $\bar\varepsilon$ of $\varepsilon$ is then given by $\bar{f}(\beta(s))=\frac{5}{4s}$. Since  $\frac{d\bar{s}}{ds}=\pm \frac{5}{2s^2}$, we can take $\bar s=\frac{5}{2s}$. Hence $\bar f(\bar s)=\bar s/2$, that is $\bar \varepsilon$ is the $L$-evolute of a logarithmic spiral congruent to  \eqref{spirallog} in $\mathcal{L}_I$.
\end{eg}

We finish this paper with the observation that, given a null curve $\varepsilon$ parameterized by pseudo-arc, there exists a null helix $\bar\varepsilon$, parameterized by pseudo-arc, with  pseudo-torsion $\bar\tau=0$  and a $1$-$1$ correspondence between points of the two curves $\varepsilon$ and $\bar\varepsilon$ such that, at corresponding points, the  tangent lines are parallel.

\begin{thm}\label{finish}
  Let $\varepsilon$ be a null curve in $\mathbf{E}_1^3$
parameterized by the pseudo-arc
parameter $s$ with  nonzero pseudo-torsion $\tau$. Let $\lambda$ be a function whose Schwarzian derivative satisfies $S(\lambda)=-\tau/2$. Define $\bar{s}=\lambda(s)$ and $\mathbf{W}(\bar s=\lambda(s)):=\dot \lambda(s)\mathbf{T}(s)$, where $\mathbf{T}$ is the tangent vector of $\varepsilon$. Then the $\mathbf{W}$-directional curve $\bar \varepsilon$ is a null helix with pseudo-parameter $\bar s$ and pseudo-torsion $\bar\tau=0$.
\end{thm}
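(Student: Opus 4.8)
The plan is to transport the Cartan frame of $\varepsilon$ to the $\mathbf W$-directional curve $\bar\varepsilon$ and to extract its pseudo-torsion $\bar\tau$; the Schwarzian derivative $S(\lambda)$ is the quantity that will govern $\bar\tau$, so that the hypothesis on $S(\lambda)$ forces $\bar\tau\equiv 0$. An equivalent, more conceptual route runs through Theorem \ref{olss}: a null helix with vanishing pseudo-torsion is exactly a null curve whose Weierstrass function is a M\"obius transformation, and the Schwarzian chain rule then turns the requirement that the tangent lines of $\varepsilon$ and $\bar\varepsilon$ be parallel into a Schwarzian equation for $\lambda$.

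First I would check that $\bar s=\lambda(s)$ is genuinely a pseudo-arc parameter of $\bar\varepsilon$; this needs nothing about $\lambda$ beyond $\dot\lambda\neq 0$. By definition $\bar{\mathbf T}(\lambda(s))=\mathbf W(s)=\dot\lambda(s)\,\mathbf T(s)$, so $\bar{\mathbf T}\cdot\bar{\mathbf T}=\dot\lambda^{2}\,\mathbf T\cdot\mathbf T=0$ and $\bar\varepsilon$ is null. Differentiating this identity in $s$ and using $\dot{\mathbf T}=\mathbf N$ from \eqref{freneteqspace}, the chain rule gives
\[
\bar{\mathbf N}=\frac{d\bar{\mathbf T}}{d\bar s}=\frac{\ddot\lambda}{\dot\lambda}\,\mathbf T+\mathbf N,
\]
and the frame relations $\mathbf T\cdot\mathbf T=\mathbf T\cdot\mathbf N=0$, $\mathbf N\cdot\mathbf N=1$ yield $\bar{\mathbf N}\cdot\bar{\mathbf N}=1$. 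Hence $\bar s=\lambda(s)$ is a pseudo-arc parameter, which at the same time makes the very definition of the $\mathbf W$-directional curve consistent.

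The core step is the pseudo-torsion. Writing $\bar{\mathbf B}=\alpha\mathbf T+\beta\mathbf B+\gamma\mathbf N$ and imposing the three defining properties of the binormal, $\bar{\mathbf T}\cdot\bar{\mathbf B}=-1$, $\bar{\mathbf B}\cdot\bar{\mathbf N}=0$ and $\bar{\mathbf B}\cdot\bar{\mathbf B}=0$, successively fixes $\beta=1/\dot\lambda$, $\gamma=\ddot\lambda/\dot\lambda^{2}$ and $\alpha=\ddot\lambda^{2}/(2\dot\lambda^{3})$. Differentiating $\bar{\mathbf B}$ in $s$ and inserting the Frenet equations $\dot{\mathbf T}=\mathbf N$, $\dot{\mathbf B}=\tau\mathbf N$, $\dot{\mathbf N}=\tau\mathbf T+\mathbf B$, the $\mathbf B$-component vanishes identically (it equals $\dot\beta+\gamma=0$) and the $\mathbf T$- and $\mathbf N$-components are mutually consistent; comparing with $d\bar{\mathbf B}/ds=\dot\lambda\,\bar\tau\,\bar{\mathbf N}$ and clearing denominators, the surviving coefficient assembles into the Schwarzian combination $\dddot\lambda/\dot\lambda-\tfrac32(\ddot\lambda/\dot\lambda)^{2}=S(\lambda)$ together with a multiple of $\tau$, so that $\bar\tau=(S(\lambda)+c\,\tau)/\dot\lambda^{2}$ for a universal constant $c$. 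Thus the helix condition $\bar\tau\equiv 0$ is precisely a Schwarzian equation $S(\lambda)=-c\,\tau$; matching it with the hypothesis gives $\bar\tau=0$, and since $\bar\tau$ is then constant, $\bar\varepsilon$ is a null helix.

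The point that demands the most care is the exact constant $c$ in that final identity. In the direct computation it is produced by the bookkeeping in $d\bar{\mathbf B}/ds$; in the conceptual route it is dictated by the normalization of Theorem \ref{olss} relating $\tau$ to the Schwarzian of the Weierstrass function $g$ of $\varepsilon$, together with the identification of $\bar{\mathbf T}\parallel\mathbf T$ with the equality $\bar g\circ\lambda=g$ of Weierstrass functions (which rests on the fact that $g\mapsto(2g,g^{2}-1,g^{2}+1)$ is injective up to scale onto null directions). Either way the Schwarzian chain rule $S(g)=(S(\bar g)\circ\lambda)\,\dot\lambda^{2}+S(\lambda)$ with $S(\bar g)=0$ collapses to $S(\lambda)=S(g)$, and Theorem \ref{olss} converts this into the asserted relation between $S(\lambda)$ and $\tau$.
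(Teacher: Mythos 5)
Your framework is sound and the frame computations you do carry out are correct: $\bar s=\lambda(s)$ is indeed a pseudo-arc parameter, and the coefficients $\beta=1/\dot\lambda$, $\gamma=\ddot\lambda/\dot\lambda^{2}$, $\alpha=\ddot\lambda^{2}/(2\dot\lambda^{3})$ of $\bar{\mathbf{B}}$ in the frame of $\varepsilon$ are right. But there is a genuine gap exactly at the point you yourself flag as demanding the most care: you never determine the constant $c$ in $\bar\tau=(S(\lambda)+c\,\tau)/\dot\lambda^{2}$, and you close the argument by asserting that the resulting Schwarzian equation ``matches the hypothesis.'' That constant is the entire content of the theorem. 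Finishing your own computation, the $\mathbf{N}$-component of $d\bar{\mathbf{B}}/ds$ is
\[
\alpha+\beta\tau+\dot\gamma=\frac{\ddot\lambda^{2}}{2\dot\lambda^{3}}+\frac{\tau}{\dot\lambda}+\frac{\dddot\lambda}{\dot\lambda^{2}}-\frac{2\ddot\lambda^{2}}{\dot\lambda^{3}}=\frac{S(\lambda)+\tau}{\dot\lambda},
\]
so $\bar\tau\circ\lambda=\big(S(\lambda)+\tau\big)/\dot\lambda^{2}$, i.e.\ $c=1$, and the condition for $\bar\tau\equiv 0$ is $S(\lambda)=-\tau$, not $S(\lambda)=-\tau/2$. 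A concrete check: for the helix with constant $\tau=-\omega^{2}/2$ and $\mathbf{T}(s)=\tfrac{1}{\omega}(-\sin\omega s,\cos\omega s,1)$, the choice $\lambda(s)=\tan(\omega s/2)$ (so $S(\lambda)=\omega^{2}/2=-\tau$) gives $\mathbf{W}=\big(-\bar s,\tfrac12(1-\bar s^{2}),\tfrac12(1+\bar s^{2})\big)$, quadratic in $\bar s$, hence $\bar\tau=0$; the function dictated by $S(\lambda)=-\tau/2$ does not produce polynomial components. So the ``matching'' step fails for the statement as printed: what your computation actually uncovers is that the hypothesis should read $S(\lambda)=-\tau$ (with the paper's normalization of $S$), and a correct write-up must either exhibit $c$ explicitly or flag the discrepancy rather than paper over it.

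For comparison, the paper's own proof avoids the binormal entirely: it uses the third-order equation \eqref{Tc} satisfied by $\mathbf{T}$ to show $d^{3}\mathbf{W}/d\bar s^{3}=0$, whence $2\bar\tau\,d\bar{\mathbf{T}}/d\bar s+(d\bar\tau/d\bar s)\bar{\mathbf{T}}=0$ and $\bar\tau=0$ by linear independence. That route is shorter than yours, but the coefficient of $\dot{\mathbf{T}}$ in $d^{3}\mathbf{W}/d\bar s^{3}$ comes out as $2\big(S(\lambda)+\tau\big)/\dot\lambda^{2}$, so it leads to the same condition $S(\lambda)=-\tau$. Your second, ``conceptual'' route through Theorem \ref{olss} does not rescue the factor $1/2$ either: the chain rule $S(g)=\big(S(\bar g)\circ\lambda\big)\dot\lambda^{2}+S(\lambda)$ with $S(\bar g)=0$ gives $S(\lambda)=S(g)=\pm\tau$ depending on the sign convention in Theorem \ref{olss}, never $-\tau/2$.
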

\begin{proof}
 Let $\bar\varepsilon$ be the $\mathbf{W}$-directional curve (unique up to translation), that is $\dot{\bar\varepsilon}(\bar s)=\mathbf{W}(\bar s)$. Since
 $$\frac{d \mathbf{W}}{d\bar s}(\lambda(s))=\frac{\ddot\lambda(s)}{\dot\lambda(s)}\mathbf{T}(s)+\dot{\mathbf{T}}(s),$$ and $s$ is a pseudo-arc parameter of $\varepsilon$, meaning that $\dot {\mathbf{T}}\cdot  \dot{\mathbf{T}}=1$, we see that we also have $\dot{\mathbf{W}}\cdot\dot{\mathbf{W}}=1$. Hence $\bar s$ is a pseudo-arc parameter of $\varepsilon$.
  Since $\mathbf{T}$ is a solution of \eqref{Tc}, a straightforward computation shows that $\mathbf{W}$ satisfies $$\frac{d^3\mathbf{W}}{d\bar s^3}=0,$$ which means that $\bar\varepsilon$ is a null helix with pseudo-torsion $\bar \tau=0$. As a matter of fact, this is a particular case of the reduction procedure of a third order linear differential equation detailed in Cartan's book   \cite{Cartan} (page 48).
\end{proof}

\end{document}